\documentclass{amsart}
\usepackage{amsmath,amssymb,amsthm,amsfonts,mathrsfs}
\usepackage{mathtools}
\usepackage{dsfont}
\usepackage{fullpage}
\usepackage{color}
\usepackage[colorlinks=true,
linkcolor=blue,
anchorcolor=blue,
citecolor=red
]{hyperref}

\newtheorem{theorem}{Theorem}[section]
\newtheorem{corollary}[theorem]{Corollary}
\newtheorem{lemma}[theorem]{Lemma}
\newtheorem{proposition}[theorem]{Proposition}
\theoremstyle{definition}
\newtheorem{definition}[theorem]{Definition}
\theoremstyle{remark}
\newtheorem{remark}[theorem]{Remark}

\newcommand{\N}{\mathbb{N}}
\newcommand{\Z}{\mathbb{Z}}
\newcommand{\Q}{\mathbb{Q}}

\newcommand{\C}{\mathbb{C}}
\newcommand{\F}{\mathbb{F}}

\renewcommand{\P}{\mathbb{P}}

\newcommand{\cO}{\mathcal{O}}
\newcommand{\cS}{\mathcal{S}}

\newcommand{\cP}{\mathcal{P}}

\renewcommand{\mod}[1]{\,(\on{mod}#1)}
\newcommand{\of}[1]{\left(#1\right)}
\newcommand{\set}[1]{\left\{#1\right\}}
\newcommand{\abs}[1]{\left\vert#1\right\vert}

\newcommand{\on}{\operatorname}
\renewcommand{\Re}{\on{Re}}

\newcommand{\Gal}{\on{Gal}}

\newcommand{\cD}{\mathcal{D}}
\newcommand{\cK}{\mathcal{K}}
\newcommand{\fp}{\mathfrak{p}}
\newcommand{\fP}{\mathfrak{P}}

\newcommand{\RNum}[1]{\uppercase\expandafter{\romannumeral #1\relax}}


\title[Alladi's formula]{Analogues of Alladi's formula over global function fields}
\author[L. Duan]{Lian Duan}
\address{ Department of Mathematics, Colorado State University, Fort Collins, Colorado 80523, USA}
\email{lian.duan@colostate.edu}

\author[B. Wang]{Biao Wang}
\address{Department of Mathematics, State University of New York at Buffalo, Buffalo, NY 14260, USA}
\email{bwang32@buffalo.edu}

\author[S. Yi]{Shaoyun Yi}
\address{Department of Mathematics, University of South Carolina, Columbia, SC 29208, USA}
\email{shaoyun@mailbox.sc.edu}

\date{\today}
\makeatletter
\@namedef{subjclassname@2020}{\textup{}2020 Mathematics Subject Classification}
\makeatother
\subjclass[2020]{11R45, 11R59}
\keywords{Alladi's formula,  M\"obius function, Prime Number Theorem, Natural density, Global function fields}

\begin{document}
	\begin{abstract}
In this paper, we show an analogue of Kural, McDonald and Sah's result on Alladi's formula for global function fields. Explicitly, we show that for a global function field $K$,  if a set $S$ of prime divisors has a natural density $\delta(S)$ within prime divisors, then 
$$-\lim_{n\to\infty}  \sum_{\substack{1\le \deg D\le n\\ D\in \mathfrak{D}(K,S)}}\frac{\mu(D)}{|D|}=\delta(S),$$
where $\mu(D)$ is the M\"{o}bius function on divisors and $\mathfrak{D}(K,S)$ is the set of all effective distinguishable divisors whose smallest prime factors are in $S$. As applications, we get the analogue of Dawsey's and Sweeting and Woo's results to the Chebotarev Density Theorem for function fields, and the analogue of Alladi's result to the Prime Polynomial Theorem for arithmetic progressions.  We also display a connection between the M\"obius function and the Fourier coefficients of modular form associated to elliptic curves. The proof of our main theorem is similar to the approach in  Kural et al.'s  article. 
	\end{abstract}
	
\maketitle
	
\section{Introduction and statement of results}\label{Sect: intro_and_main_thm}

Let $n\ge1$ be an integer. Let $\mu(n)$ be  the M\"{o}bius function defined by $\mu(n)=(-1)^k$ if $n$ is the product of $k$ distinct primes and zero otherwise.  
Let $p_{\min}(n)$ be the smallest prime factor of $n$ and let $p_{\max}(n)$ be the largest prime factor of $n$. Let $p_{\min}(1)=p_{\max}(1)=1$. In 1977, Alladi  \cite{Alladi1977duality}  introduced a duality between $p_{\min}(n)$ and  $p_{\max}(n)$, which says that for any function $f$ defined on integers  with $f(1)=0$, we have
\begin{align}
	\sum_{d|n}\mu(d)f(p_{\min}(d))&=-f(p_{\max}(n)),\label{duality1}\\
	\sum_{d|n}\mu(d)f(p_{\max}(d))&=-f(p_{\min}(n)).\label{duality2}
\end{align}
Applying the prime number theorem for arithmetic progressions and \eqref{duality1}, he \cite{Alladi1977} showed that if $(\ell,k)=1$, then
\begin{equation}\label{alladi}
	-\sum_{\substack{n\geq 2\\ p_{\min}(n)\equiv \ell (\on{mod}k)}}\frac{\mu(n)}{n}=\frac1{\varphi(k)},
\end{equation}
where $\varphi$ is Euler's totient function. 

Alladi's formula \eqref{alladi} shows a relationship between the M\"{o}bius function $\mu(n)$ and the density of primes in arithmetic progressions.   In 2017, Dawsey \cite{Dawsey2017} first generalized \eqref{alladi} to the setting of Chebotarev densities for finite Galois extensions of $\Q$.  Then Sweeting and Woo \cite{SweetingWoo2019}  generalized   Dawsey's result to number fields.  Recently, Kural, McDonald and Sah \cite{KuralMcDonaldSah2020}  generalized  all these results to natural densities of sets of primes. The second author of this article  showed the analogues of these results over $\Q$ for some arithmetic functions other than $\mu$ in \cite{Wang2020ijnt, Wang2020rj, Wang2020jnt}. We note here that Ono, Schneider and Wagner showed some beautiful partition-theoretic analogues of Alladi's formula \eqref{alladi} in \cite{OnoSchneiderWagner2017, OnoSchneiderWagner2021}. In this paper, we will show the analogue of Kural et al.'s result \cite{KuralMcDonaldSah2020}  over global function fields.

Let $p$ be a prime and let $q$ be a power of prime $p$. Let $\F_q$ be a finite field of $q$ elements. 
Take $K/\F_p(x)$ to be a finite extension with constant field $\F_q$, which is called a global function field (or simply a function field). A \emph{prime divisor} (or simply a \emph{prime}) in $K$ is defined to be a discrete valuation ring $R_P$ with maximal ideal $P$ such that $\F_q\subseteq R_P$ and the quotient field of $R_P$ is $K$. The \emph{norm} of $P$, denote by $|P|$, is defined to be the size of the residue field $\kappa_P$ of $R_P$, i.e. $|P|=\#(R_P/P)=\#\kappa_P$, which is a power $q^{\deg P}$ of the cardinality of the ground field $\F_q$. Here the exponent $\deg P$ is called the \emph{degree} of $P$.  

A \emph{divisor} $D$ is a finite formal sum of prime divisors, i.e. 
\begin{equation}\label{Eqn: divisor}
	D=\sum_{P} a_P\cdot P, 
\end{equation}
such that every $a_P$ is an integer, and $a_P=0$ for all but finitely many $P$. If $D$ satisfies additionally that $a_P\geq 0$ for all $P$, then we say $D$ is \emph{effective} and write $D\geq 0$. We say $D$ is \emph{supported} by $P$, or $P$ is a \emph{prime factor} of $D$, and write $P|D$, if $a_P$ in \eqref{Eqn: divisor} is nonvanished. The \emph{degree} $\deg D$ of $D$ is defined by $\deg D:=\sum a_P \deg P$, and the norm of $D$ is $q^{\deg D}$. In particular, for every effective $D$, $\deg D\geq 0$, and hence $|D|\geq 1$. 

Let $\cD$ be the set consisting of all the divisors of $K$. Then $\cD$ has two natural subsets: one is the subset of all effective divisors, denoted by $\cD^+$; and the other one is the subset which  consists of all prime divisors, denoted by $\cP$. For any subset $S\subseteq\cP$, we say $S$ has a \textit{natural density} $\delta(S)$, if the following limit exists:
\begin{equation}
	\delta(S):=\lim_{n\to\infty}\frac{\pi_{K,S}(n)}{\pi_K(n)},
\end{equation}
where $\pi_{K,S}(n):=\#\set{P\in S:\deg P=n}$, and $\pi_K(n):=\pi_{K,\cP}(n)$.

Let $d_{-}(D):=\min\set{\deg P: P|D}$ be the minimal degree of the prime factors of $D$. We say that $D$ is \textit{distinguishable} if $D\neq 0$ and there is a unique prime factor, say $P_{\min}(D)$, of $D$ attaining
the minimal degree $d_{-}(D)$. We define
\begin{equation}\label{Definition of distingushable}
	\mathfrak{D}(K,S):=\set{D\in \cD^+: D \text{ is distinguishable and } P_{\min}(D)\in S}.
\end{equation}
Analogous to the classical definition, if $D=0$ or $D=a_1P_1+\cdots+a_kP_k$ with all $a_i>0$, the M\"obius function $\mu(D)$ is defined by 
$$
\mu(D)=
\begin{cases}
	1 & \text{ if }D=0,\\
	(-1)^k & \text{ if }k>0 \text{ and }a_i=1 \text{ for all }i,\\
	0 & \text{ if at least one }a_i>1.
\end{cases}
$$ 

The following theorem is our main result.

\begin{theorem}\label{mainthm}
	Given a global function field $K$, with the above notations, if $S\subseteq\cP$ has a natural density $\delta(S)$, then we have that
	\begin{equation}\label{mainthmeq}
		-\lim_{n\to\infty}  \sum_{\substack{1\le \deg D\le n\\ D\in \mathfrak{D}(K,S)}}\frac{\mu(D)}{|D|}=\delta(S).
	\end{equation}
\end{theorem}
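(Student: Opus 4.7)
My plan is to adapt Kural, McDonald, and Sah's dualization argument to the function-field setting, using the rationality of $\zeta_K$ (in $q^{-s}$) and Weil's Riemann Hypothesis for curves in place of the analytic properties of the Riemann zeta function.

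\emph{Step 1 (Dualization).} Every $D\in\mathfrak{D}(K,S)$ with $\mu(D)\neq 0$ decomposes uniquely as $D=P+D'$, where $P=P_{\min}(D)\in S$ and $D'\geq 0$ is either $0$ or a squarefree divisor all of whose prime factors have degree strictly greater than $\deg P$. Since $P\nmid D'$, we have $\mu(D)=-\mu(D')$ and $|D|=|P|\cdot|D'|$; grouping by $d:=\deg P$ yields
$$-\sum_{\smat{D\in\mathfrak{D}(K,S)\\ 1\leq\deg D\leq n}}\frac{\mu(D)}{|D|} \;=\; \sum_{d=1}^{n}\frac{\pi_{K,S}(d)}{q^d}\,h_d(n-d),$$
where $h_d(m):=\sum_{D'}\mu(D')/|D'|$ runs over squarefree effective $D'$ (including $D'=0$) with $d_{-}(D')>d$ and $\deg D'\leq m$.

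\emph{Step 2 (Full-density case $S=\cP$).} Let $H_d(k):=\sum\mu(D')$ over sqfree effective $D'$ of degree $k$ with $d_{-}(D')>d$. The Euler-product identity $\sum_{k\geq 0}H_d(k)u^k=\prod_{\deg Q>d}(1-u^{\deg Q})$, combined with the simple pole of $\zeta_K$ at $s=1$, yields $\lim_{m\to\infty}h_d(m)=0$ for every $d$; in particular, taking $d=0$ gives the Mertens-type limit $h_0(n)=\sum_{\deg D\leq n}\mu(D)/|D|\to 0$ with effective rate via Weil's bound. Partitioning nonzero squarefree effective divisors into distinguishable ones and those with $j\geq 2$ prime factors of common minimum degree $d$ (so that $D=Q_1+\cdots+Q_j+D''$ with $D''\geq 0$ supported on primes of degree $>d$) produces
$$-\sum_{\smat{D\in\mathfrak{D}(K,\cP)\\ 1\leq\deg D\leq n}}\frac{\mu(D)}{|D|} \;=\; 1-h_0(n)+\sum_{d\geq 1}\sum_{j\geq 2}\binom{\pi_K(d)}{j}\frac{(-1)^j}{q^{jd}}h_d(n-jd).$$
By the Prime Polynomial Theorem $\pi_K(d)/q^d=O(1/d)$, the double sum on the right is dominated uniformly in $n$ by $\sum_d\sum_{j\geq 2}(\pi_K(d)/q^d)^j/j!=O(1)$, and each summand tends to $0$; dominated convergence then gives the theorem for $S=\cP$.

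\emph{Step 3 (Density transfer).} For general $S$ with $\delta(S)=\delta$, write $\pi_{K,S}(d)=\delta\pi_K(d)+\varepsilon_d\pi_K(d)$ with $\varepsilon_d\to 0$, and substitute into Step 1 to obtain
$$\sum_{d=1}^{n}\frac{\pi_{K,S}(d)}{q^d}h_d(n-d) \;=\; \delta\cdot\sum_{d=1}^{n}\frac{\pi_K(d)}{q^d}h_d(n-d)+\sum_{d=1}^{n}\varepsilon_d\frac{\pi_K(d)}{q^d}h_d(n-d).$$
The first term on the right tends to $\delta$ by Step 2. For the error, split at a threshold $D_0$: the range $d\leq D_0$ gives a finite sum in which each term tends to $0$ (as $h_d(n-d)\to 0$ for fixed $d$), while the range $d>D_0$ is bounded by $\sup_{d>D_0}|\varepsilon_d|$ times a quantity uniformly bounded in $n$ (controlled via Step 2). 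Letting $D_0\to\infty$ shows the error tends to $0$, and so the left-hand side tends to $\delta=\delta(S)$.

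\emph{Main obstacle.} The principal technical difficulty is establishing sufficient uniform control of $h_d(m)$---most cleanly, non-negativity $h_d(m)\geq 0$ for $d\geq 1$ (verified in small cases and plausible via the generating identity $\sum_m h_d(m)v^m=Z_K^{(d)}(v/q)(1-v/q)/L_K(v/q)$, where $Z_K^{(d)}(u):=\prod_{\deg Q\leq d}(1-u^{\deg Q})^{-1}$) or an absolute bound strong enough for the dominated-convergence arguments. This rests on Weil's bound for the zeros of the $L$-polynomial $L_K(u)$ together with careful analysis of the partial Euler product $Z_K^{(d)}$ near $s=1$.
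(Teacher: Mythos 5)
Your outline takes a genuinely different route than the paper: you decompose each distinguishable $D$ directly as $P_{\min}(D)+D'$ and reduce the problem to controlling the truncated Euler-type sums $h_d(m)=\sum_{\deg D'\le m,\ d_-(D')>d}\mu(D')/|D'|$, whereas the paper goes through the indirect duality $\sum_{A\ge B}\mu(B)1_{\mathfrak{D}(K,S)}(B)f(d_-(B))=-Q_S(A)f(d^+(A))$, reduces to showing $\sum_{\deg A=n}Q_S(A)\sim\delta(S)c_Kq^n$, and proves that via Manstavichyus-style estimates for smooth divisors (Lemma~\ref{smoothbd}) together with a Fujisawa--Minamide bound for $M(n,m)$ (Lemma~\ref{averagemu}). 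Your decomposition in Step~1 and the combinatorial identity in Step~2 are correct.

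However, the ``main obstacle'' you flag is a genuine, load-bearing gap rather than a technicality, and the tools you invoke do not close it. Everything in Steps~2 and~3 (the dominated convergence over the double sum, and the tail estimate in the density transfer) requires a bound on $h_d(m)$ that is \emph{uniform in $d$}. Weil's Riemann Hypothesis for $L_K$ gives, via partial fractions on $\sum_n H_d(n)u^n=\frac{(1-u)(1-qu)}{L_K(u)\prod_{\deg Q\le d}(1-u^{\deg Q})}$, a bound of the shape $|h_d(m)|\ll C_d\,q^{-m/2}$ for $m>d$ --- but the residues $C_d$ are controlled by $1/\prod_{\deg Q\le d}|1-\alpha_i^{-\deg Q}|$ with $|\alpha_i|=\sqrt q$, and since $\sum_k \pi_K(k)\,q^{-k/2}\sim\sum_k q^{k/2}/k$ diverges, this infinite product diverges to $0$, so $C_d$ can grow like $\exp(cq^{d/2}/d)$. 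In the intermediate range $d\asymp n$ (where $m=n-jd$ is comparable to $d$), the resulting bound $C_d\,q^{-m/2}$ is useless, so the asserted domination by $\sum_d\sum_{j\ge2}(\pi_K(d)/q^d)^j/j!$ does not follow. The conjectured non-negativity $h_d(m)\ge0$ would indeed rescue the argument (giving $0\le h_d(m)\le1$), but ``verified in small cases and plausible'' is not a proof, and no mechanism you cite (Weil's bound, the rationality of $Z_K$, the partial Euler product $Z_K^{(d)}$) yields it. The paper avoids this entirely: the average-of-$Q_S$ route replaces your $h_d$-bounds with the smooth-divisor count $\Psi(n,m)$, for which Manstavichyus's theorem gives a Dickman-type bound that is effective uniformly in the needed range, and handles the Möbius cancellation via $M(n,m)\ll q^n\exp(-c\sqrt n+q^m)$ restricted to $m\approx\log\log n$. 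To make your argument rigorous you would need either a proof of the uniform bound on $h_d(m)$ (essentially a quantitative sieve/Möbius estimate in the spirit of the paper's Sect.~\ref{Sec:intermediate_thm}) or to fall back on the paper's smooth-divisor machinery, at which point the two proofs largely converge.
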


We want to take the rest of this section to introduce several consequences of our main result. The first one is an application of the Chebotarev Density Theorem of function field (cf. Theorem~\ref{Thm: Chebotarev_A}). Recall that a finite extension $L/K$ of function field is called \emph{geometric} if $L$ and $K$ have the same constant field. In this case, for every unramified prime divisor $P$ of $K$, we can associate it with a conjugacy class of $\Gal(L/K)$, called the \emph{Frobenius class} of $P$ and denoted by $(P, L/K)$. For more details about this part, see Sect.~\ref{Sect: Gal_ext}. 
\begin{corollary} \label{cordawsey_AFF}
	Let $L/K$ be a geometric Galois extension with Galois group $G = \Gal(L/K)$. Then for any conjugacy class $C\subseteq G$, we have
	\begin{equation}\label{cordawseyeq_AFF}
		-\lim_{n\to\infty} \sum_{\substack{D \in \mathfrak{D}(K,\cP), 1\le \deg D\le n \\ (P_{\min}(D), L/K)=C }}\frac{\mu(D)}{|D|}=\frac{\#C}{\#G}.
	\end{equation}	
\end{corollary}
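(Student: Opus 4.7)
The plan is to reduce Corollary \ref{cordawsey_AFF} directly to Theorem \ref{mainthm} by choosing $S$ to be the natural Chebotarev set. Specifically, I would define
$$S:=\set{P\in\cP:P\text{ is unramified in }L/K\text{ and }(P,L/K)=C}.$$
Since the Frobenius class $(P,L/K)$ is only defined when $P$ is unramified, the condition $(P_{\min}(D),L/K)=C$ in the sum on the left-hand side of \eqref{cordawseyeq_AFF} is equivalent to the condition $P_{\min}(D)\in S$. Thus the indexing sets in \eqref{cordawseyeq_AFF} and in the conclusion of Theorem \ref{mainthm} (applied to this $S$) coincide term by term.

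With this identification made, the only remaining task is to verify that $\delta(S)=\#C/\#G$. This is precisely the content of the Chebotarev Density Theorem for global function fields (Theorem \ref{Thm: Chebotarev_A}). In the geometric setting, only finitely many prime divisors of $K$ ramify in $L$, so excluding them changes $\pi_{K,S}(n)$ by at most a bounded amount and hence has no effect on the limit $\lim_{n\to\infty}\pi_{K,S}(n)/\pi_K(n)$. Chebotarev then yields $\delta(S)=\#C/\#G$.

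Finally, plugging this $S$ into Theorem \ref{mainthm} gives
$$-\lim_{n\to\infty}\sum_{\smat{1\le \deg D\le n\\ D\in \mathfrak{D}(K,S)}}\frac{\mu(D)}{|D|}=\delta(S)=\frac{\#C}{\#G},$$
which is exactly \eqref{cordawseyeq_AFF}. I do not foresee any substantive obstacle here: the corollary is a formal consequence of the main theorem together with Chebotarev, and the only minor subtlety, namely handling the finitely many ramified primes, is absorbed harmlessly into the natural density limit.
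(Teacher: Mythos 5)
Your proposal is correct and follows essentially the same route as the paper: define $S$ to be the set of primes with Frobenius class $C$, invoke the Chebotarev Density Theorem for function fields to get $\delta(S)=\#C/\#G$, and then specialize Theorem~\ref{mainthm}. The only superficial difference is that you cite the qualitative version (Theorem~\ref{Thm: Chebotarev_A}) while the paper uses the quantitative version (Theorem~\ref{Thm: Chebotarev_B}) together with the PNT for function fields; you also carefully make the unramified condition explicit, a minor point the paper leaves implicit.
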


\begin{proof}
	Suppose that the common constant field of $L$ and $K$ is $\F_q$.    For any conjugacy class $C\subseteq G$, define
	$$S_C:=\set{P\in\cP: (P, L/K)=C}.$$
	It follows from the classical Chebotarev Density Theorem for function fields (see Theorem~\ref{Thm: Chebotarev_B}) that 
	\begin{equation*}
		\abs{\pi_{K,S_C}(n)-\frac{\#C}{\#G}\frac{q^n}{n}}\ll\frac{q^{n/2}}{n}.
	\end{equation*}
	Since $\pi_K(n)\sim q^n/n$  by the Prime Number Theorem for function fields (see Lemma~\ref{Lem: prime_num_thm}), taking $S=S_C$ in Theorem~\ref{mainthm} gives $\delta(S)=\#C/\#G$, and \eqref{cordawseyeq_AFF} follows. 
\end{proof}

Now we specialize to the rational function field over $\F_q$. Fix the choice of the parameter $x$ in $K$, one can realize $K$ as $\F_q(x)$. By the well-known relationship between the projective line $\P^1_{\F_q}$ and its affine chart $\mathbb{A}^1_{\F_q, x}=\P^1_{\F_q}-\{\infty\}$, for a polynomial $F$ in the affine coordinate ring $\F_q[x]$, we take the \emph{divisor of zeros} of $F$ to be
$$(F)_0:=\sum_{{\rm{ord}}_P(F)\geq 0} {\rm{ord}}_P(F)P.$$
Using the fact that $\F_q(x)$ has class number one, the map $\phi: F\mapsto (F)_0$ gives a bijection of the following sets. 
\begin{align*}
	\set{\text{monic polynomials $F\in \F_q[x]$}} & \Longleftrightarrow \set{\text{effective divisors $D$ not supported by $(\infty)$}}\\
	\set{\text{irreducible monic polynomials $F_P\in \F_q[x]$}} & \Longleftrightarrow \set{\text{prime divisors $P\neq (\infty)$}}
\end{align*}
For the definition of class number, see Sect.~\ref{Sect: genus}.  One can verify that $\phi$ defined above is with respect to the degree, norm and prime factorization. In particular, if we let $d_{-}(F):=\min \set{ \deg F_P: F_P|F, F_P\ \text{is irreducible}}$ then 
$
d_{-}(F)=d_{-}(\phi(F))
$. Hence the distinguishable elements are mapped to the distinguishable elements. 
In addition, let $S$ be a subset of monic irreducible polynomials of $\F_q[x]$, we have 
$$
\pi_{q, S}(n):=\#\set{F_P\in S: \deg F_P=n}=\pi_{\F_q(x), \phi(S)}(n).
$$
Thus $\delta(S)=\delta(\phi(S))$. For a distinguishable polynomial $F$, we let $p_{\min}(F)$ denote the minimal prime factor of $F$, and let 
\begin{equation*}
	\mathfrak{D}(q,S):=\set{F\in\F_q[x]: F  \text{ is monic and distinguishable, and } p_{\min}(F)\in S}.
\end{equation*}
Then 
$$
\phi(\mathfrak{D}(q,S))=\mathfrak{D}(\F_q(x), \phi(S)). 
$$
With all the above, we are ready to state the application of Theorem~\ref{mainthm} to $\F_q[x]$. 

\begin{corollary}\label{mainthm_rational}
	Let  $q\ge2$ be fixed.	If $S\subseteq\cP$ has a natural density $\delta(S)$, then we have that
	\begin{equation}
		-\lim_{n\to\infty}  \sum_{\substack{1\le \deg F\le n\\ F\in \mathfrak{D}(q,S)}}\frac{\mu(F)}{|F|}=\delta(S).
	\end{equation}
\end{corollary}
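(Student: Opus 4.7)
The plan is to reduce Corollary \ref{mainthm_rational} to Theorem \ref{mainthm} by invoking the bijection $\phi : F \mapsto (F)_0$ that the paragraphs preceding the corollary have already set up. Take $K = \F_q(x)$ and let $T := \phi(S) \subset \cP$ be the corresponding set of prime divisors of $K$. The preamble records that $\pi_{q,S}(n) = \pi_{K,T}(n)$, and the two denominators $\pi_q(n)$ and $\pi_K(n)$ (the number of monic irreducibles of degree $n$, respectively the number of prime divisors of $K$ of degree $n$) differ only by the single prime $(\infty)$ at $n = 1$, so they are asymptotically equivalent. Hence $T$ has natural density $\delta(T) = \delta(S)$.

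Next I would verify that $\phi$ restricts to a bijection $\mathfrak{D}(q,S) \xrightarrow{\sim} \mathfrak{D}(K,T)$. The map $\phi$ is already known to be a bijection from monic polynomials onto effective divisors not supported by $(\infty)$, preserving degrees, prime factorizations, the Möbius function, and the property of being distinguishable (with matching minimum prime). What remains is to check that no $D \in \mathfrak{D}(K,T)$ is supported by $(\infty)$. Indeed, if $(\infty) \mid D$ then $\deg(\infty) = 1$ forces $d_{-}(D) \le 1$; but $P_{\min}(D) \in T$ is distinct from $(\infty)$, so either $\deg P_{\min}(D) < 1$ (impossible) or $\deg P_{\min}(D) = 1$, contradicting the uniqueness of the minimal prime factor. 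Thus $(\infty) \nmid D$, placing $D$ in the image of $\phi$.

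With the bijection in hand, and since $\phi$ preserves $\mu$ and the norm ($|F| = q^{\deg F} = |\phi(F)|$), the truncated sums agree term by term:
$$
\sum_{\smat{1\le \deg D\le n\\ D\in \mathfrak{D}(K,T)}}\frac{\mu(D)}{|D|} \;=\; \sum_{\smat{1\le \deg F\le n\\ F\in \mathfrak{D}(q,S)}}\frac{\mu(F)}{|F|}.
$$
Applying Theorem \ref{mainthm} to $K = \F_q(x)$ and $T = \phi(S)$, the left-hand side tends to $-\delta(T) = -\delta(S)$, giving the corollary.

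There is no real obstacle beyond careful bookkeeping; the only subtlety is the treatment of the infinite place $(\infty)$, both in identifying the two natural densities and in confirming that distinguishability forces $(\infty)$ out of every $D \in \mathfrak{D}(K,T)$. Both points are handled by the single observation that $\deg(\infty) = 1$ combined with the uniqueness clause in the definition of $\mathfrak{D}(K,T)$.
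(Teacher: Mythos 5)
Your proof is correct and follows the same route the paper sketches: transport everything through the bijection $\phi : F \mapsto (F)_0$ and invoke Theorem~\ref{mainthm} for $K = \F_q(x)$. The one place you go beyond the paper's terse justification is in explicitly verifying that $\phi(\mathfrak{D}(q,S)) = \mathfrak{D}(\F_q(x),\phi(S))$ by showing no distinguishable divisor with $P_{\min}(D)\in\phi(S)$ can be supported at $(\infty)$ (since $\deg(\infty)=1$ would force a second minimal-degree prime, violating uniqueness), which is a genuinely needed detail the paper only implicitly encodes in the remark that $\{(\infty)\}$ has density zero.
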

\begin{proof}
	This corollary directly follows from the specialization and the correspondence above. Note that the set $\{(\infty)\}$ has density zero. 
\end{proof}

As another application, we get the analogue of  Alladi's formula over finite fields  by the Prime Polynomial Theorem for arithmetic progressions.  

\begin{corollary}\label{coralladi}
	For relatively prime $f, g\in\F_q[x]$, we have
	\begin{equation}\label{coralladieq}
		-\lim_{n\to\infty}  \sum_{\substack{F \in \mathfrak{D}(q,\cP), 1\le \deg F\le n\\ p_{\min}(F)\equiv f\mod g}}\frac{\mu(F)}{|F|}=\frac1{\varphi(g)}.
	\end{equation}	
	Here $\varphi(g)$ is the function field Euler totient function given by the number of units in $\F_q[x]/g\F_q[x]$.
\end{corollary}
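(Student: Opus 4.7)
The plan is to deduce Corollary~\ref{coralladi} directly from Corollary~\ref{mainthm_rational}: the only new input needed is the natural density of the set of monic irreducible polynomials in a prescribed residue class modulo $g$.

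First I would set
$$
S_{f,g}:=\set{P\in\F_q[x]: P \text{ monic irreducible, } P\equiv f\mod g}.
$$
The coprimality $(f,g)=1$ ensures that the residue of $f$ lies in $(\F_q[x]/g\F_q[x])^{\times}$, which is the only obstruction to there being infinitely many such primes. With this choice, the condition $p_{\min}(F)\equiv f\mod g$ under the sum in \eqref{coralladieq} is exactly $F\in\mathfrak{D}(q,S_{f,g})$, so the desired identity is the specialization $S=S_{f,g}$ of Corollary~\ref{mainthm_rational}, provided we can show $\delta(S_{f,g})=1/\varphi(g)$.

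To compute this density, I would invoke the Prime Polynomial Theorem in arithmetic progressions, the $\F_q[x]$-analogue of Dirichlet's theorem. The standard argument proceeds by decomposing the indicator of the class of $f$ via orthogonality of Dirichlet characters modulo $g$ and combining this with the analytic properties of the associated $L$-functions over $\F_q[x]$; since these $L$-functions are polynomials in $q^{-s}$ satisfying the Riemann hypothesis (Weil), one obtains the sharp estimate
$$
\pi_{q,S_{f,g}}(n)=\frac{1}{\varphi(g)}\cdot\frac{q^n}{n}+O\of{\frac{q^{n/2}}{n}}.
$$
Dividing by $\pi_q(n)\sim q^n/n$ (Lemma~\ref{Lem: prime_num_thm}) then yields $\delta(S_{f,g})=1/\varphi(g)$, and substituting into Corollary~\ref{mainthm_rational} gives \eqref{coralladieq}.

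There is essentially no genuine obstacle here beyond the main theorem: the only item needing a literature pointer is the function field Dirichlet theorem with an error term of the above shape, which is well known and recorded, for instance, in Rosen's \emph{Number Theory in Function Fields}. Once that estimate is in hand, the corollary is an immediate specialization.
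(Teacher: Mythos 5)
Your proposal matches the paper's proof: define the set of monic irreducible polynomials in the residue class of $f$ modulo $g$, invoke the Prime Polynomial Theorem for arithmetic progressions (as in Rosen, Theorem~4.8) to obtain density $1/\varphi(g)$, and specialize the main result. The only cosmetic difference is that you route through Corollary~\ref{mainthm_rational} while the paper appeals directly to Theorem~\ref{mainthm}, but these are the same step given the correspondence already set up in the text.
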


\begin{proof}
	For relatively prime $f,g\in \F_q[x]$, let 
	$$S(f, g)=\set{p\in \cP: p=f+gh \text{ for some } h\in \F_q[x]}.$$
	Then 
	the Prime Polynomial Theorem for arithmetic progressions (e.g., see \cite[Theorem 4.8]{Rosen2002}) says that
	\begin{equation}
		\pi_{q, S(f,g)}(n)=\frac{\pi_q(n)}{\varphi(g)}+O\of{\frac{q^{n/2}}{n}}.
	\end{equation}
	
	Taking $S=S(f, g)$ in Theorem~\ref{mainthm} gives
	$\delta(S)=1/\varphi(g)$, and (\ref{coralladieq}) follows.
\end{proof}

\begin{remark}
	In fact, one can verify that the above corollary is a special case of Corollary~\ref{cordawsey_AFF}.
\end{remark}

\subsection{Notations} We use the notations $f(n)=O_{K,\theta,\dots}(g(n))$ or $f(n)\ll_{K,\theta,\dots}g(n)$ to mean that there is a positive constant $C=C(K,\theta,\dots)>0$ such that $|f(n)|\le C|g(n)|$ for all $n\ge1$, and $C$ is called the implied constant of this $O$-term. Sometimes we  write $f=O(g)$ or $f\ll g$ for simplicity, if the implied constant depends at most on the field $K$ and some fixed positives. We use capital letters $A,B,D$ to denote divisors. Our main results and their proofs only concern effective divisors, hence all the divisors after Sect.~\ref{Sect: Background} will be assumed to be effective. The
following table offers an analogy between some of our notations on function fields and Kural et. al's notations on number fields.
\begin{center}
	\begin{tabular}{|c|c|} \hline 
		Function fields notations & Number fields notations \\ \hline 
		Function field $K/\mathbb{F}_p(x)$    & Number field $K/\mathbb{Q}$ \\
		$D\in\mathcal{D}^+$&$\mathfrak{a}\subseteq \mathcal{O}_K$\\
		$D=\sum\limits_Pa_P\cdot P$&$\mathfrak{a}=\prod\limits_{i=1}^r\mathfrak{p}_i^{e_i}$\\
		$\mu(D)$&$\mu(\mathfrak{a})$\\
		$|D|=q^{\deg D}$&$\mathrm{N}(\mathfrak{a})$\\
		$\mathfrak{D}(K,S)$&$D(K,S)$\\
		$d^+(D)$&$M(\mathfrak{a})$\\
		$Q_S(D)$&$Q_S(\mathfrak{a})$\\
		$\pi_{K,S}(n)$&$\pi_S(K;X)$ \\ \hline 
	\end{tabular}   
\end{center}

\subsection{Organization of this paper}
Theorem~\ref{mainthm} may be viewed as a discrete version of Kural et al.'s result in \cite[Theorem 1.1]{KuralMcDonaldSah2020}. Hence the approach to proving Theorem~\ref{mainthm} here is similar to  their proof. After reviewing some concepts and theorems related to  the main results in Sect.~\ref{Sect: Background}, we develop a duality identity between the prime factors of divisors in Sect.~\ref{Sect: duality}. Then in Sect.~\ref{Sec:largest_prime_divisor}, we are devoted to finding the asymptotic formula for the number of prime factors of divisors that attain the largest degree. Finally, in Sect.~\ref{Sec:intermediate_thm}, we show the analogue of Alladi's intermediate result \cite[Theorem 6]{Alladi1977} that derives the desired formula \eqref{mainthmeq} from the  asymptotic result in Sect.~\ref{Sec:largest_prime_divisor}. 
A feature in Sect.~\ref{Sec:intermediate_thm} is that for function fields we can show the analogue of \cite[Theorem 6]{Alladi1977} without using  Axer's theorem, which was applied  in \cite{Alladi1977,  KuralMcDonaldSah2020}. Finally in Sect.~\ref{Sect: modular_explanation}, when the function fields are coming from the reduction of an elliptic curve $E$ over $\Q$, we study the connection between the restricted sums of $\mu(D)/|D|$ and the Fourier coefficients of the modular form corresponding to this elliptic curve. 

\section{Background}\label{Sect: Background}
In this section, we will review some concepts and theorems, including the Prime Number Theorem  and the Chebotarev Density Theorem for function fields. In particular, the class number and genus will be reviewed in Sect.~\ref{Sect: genus}. In Sect.~\ref{Sect: Gal_ext} we recall the necessary proprieties of the geometric extensions of algebraic function fields. The main references for this section are \cite[Chapters~5, 7, 8, 9]{Rosen2002}, \cite[Chapter~8]{Fulton-Alg-Curve}  and \cite{Atiyah-Macdonald-commutative-alg}. Since there is nothing original in this section, the experts of the function fields are suggested to directly move to Sect.~\ref{Sect: duality}. 

\subsection{Prime Number Theorem for function fields }\label{Sect: prime_num_thm}
It is well known that the Prime Number Theorem (PNT) says that the number $\pi(x)$ of primes up to $x$ is asymptotic to ${x}/{\log x}$ as $x\to\infty$, and the Riemann Hypothesis is equivalent the assertion that $\pi(x)=\on{Li}(x)+O\of{x^{1/2}\log x}$ (e.g., see \cite[Theorem 6.1]{Koukoulopoulos2020}), where $\on{Li}(x)=\int_2^xdt/\log t$.  In the function fields,  for the number $\pi_K(n)$ of prime divisors with degree $n$,  we have the following analogue (e.g. see \cite[Theorem~5.12]{Rosen2002}). 

\begin{lemma}[PNT for function fields]\label{Lem: prime_num_thm}
	We have
	\begin{equation}\label{Lem: prime_num_thm_eq}
		\pi_K(n)=\frac{q^n}{n}+O\of{\frac{q^{n/2}}{n}}.
	\end{equation}
\end{lemma}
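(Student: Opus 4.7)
My plan is to derive the estimate from the rationality and the Riemann hypothesis for the zeta function of $K$. First I would introduce the zeta function of $K$ in the variable $u = q^{-s}$, namely
$$Z_K(u) = \sum_{D \in \cD^+} u^{\deg D} = \prod_{P \in \cP}\bigl(1 - u^{\deg P}\bigr)^{-1},$$
and invoke Weil's theorem for curves over finite fields, which gives the rational expression
$$Z_K(u) = \frac{L_K(u)}{(1-u)(1-qu)}, \qquad L_K(u) = \prod_{i=1}^{2g}(1-\alpha_i u),$$
where $g$ is the genus of $K$ and the reciprocal roots satisfy $|\alpha_i| = \sqrt{q}$ (the Riemann hypothesis for function fields).

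Next I would compute the logarithmic derivative $u Z_K'(u)/Z_K(u)$ in two different ways. Expanding the Euler product gives the coefficient $\sum_{d \mid n} d \cdot \pi_K(d)$ of $u^n$, while expanding the rational form gives $q^n + 1 - \sum_{i=1}^{2g}\alpha_i^n$. Matching coefficients yields the point-count identity
$$\sum_{d \mid n} d \cdot \pi_K(d) = q^n + 1 - \sum_{i=1}^{2g}\alpha_i^n$$
for every $n \geq 1$. Applying Möbius inversion to solve for $n\pi_K(n)$, and using $\sum_{d\mid n}\mu(n/d)=0$ for $n\ge 2$ to kill the constant $1$, produces
$$n \cdot \pi_K(n) = \sum_{d \mid n}\mu(n/d)\, q^d \;-\; \sum_{i=1}^{2g}\,\sum_{d \mid n}\mu(n/d)\,\alpha_i^d.$$

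Finally I would isolate the main term and bound the error. Since every proper divisor of $n$ is at most $n/2$, the first sum equals $q^n + O(q^{n/2})$. The Weil bound $|\alpha_i|=q^{1/2}$, combined with the fact that $2g$ depends only on $K$, gives
$$\Bigl|\sum_{i=1}^{2g}\sum_{d \mid n}\mu(n/d)\alpha_i^d\Bigr| \;\le\; 2g\sum_{d \mid n} q^{d/2} \;=\; O_K\!\bigl(q^{n/2}\bigr).$$
Dividing both sides of the displayed identity by $n$ yields the claimed asymptotic. The substantive ingredient is Weil's Riemann hypothesis controlling the sizes of the $\alpha_i$; once that is available, the remainder of the argument is a short formal manipulation with logarithmic derivatives and Möbius inversion, so no serious obstacle remains.
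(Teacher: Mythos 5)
Your proof is correct, and it is the standard derivation of the function-field prime number theorem from Weil's Riemann hypothesis via the rationality of $Z_K(u)$, the logarithmic-derivative identity $\sum_{d\mid n}d\,\pi_K(d)=q^n+1-\sum_i\alpha_i^n$, and M\"obius inversion. The paper itself does not reproduce a proof but simply cites Rosen's Theorem~5.12, which is proved by exactly this argument, so your approach coincides with the one underlying the citation.
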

This is  analogous to the asymptotic formula $\pi(x)=\on{Li}(x)+O\of{x^{1/2}(\log x)^2}$,
if one takes $q^n$ as $x$ and $n$ as $\log x$. And we will use the asymptotic  $\pi_K(n)\sim{q^n}/{n}$ as $n\to\infty$ several times in the later proofs.

\subsection{Class number and genus}\label{Sect: genus}
In this section, we will fix a function field $K$ and will follow the notations introduced in Sect.~\ref{Sect: intro_and_main_thm}. Recall that the divisor group $\cD$ is an abelian group with identity the zero divisor $0$, and so the inverse of $D=\sum a_P P$ is $-D=\sum -a_P P$. If $x\in K$, then the \emph{principal divisor} $(x)$ is defined by
$$
(x):=\sum {\rm{ord}}_P(x) P.
$$
Here ${\rm{ord}}_{P}: K\to \Z$ is the order function attached to $P$; see \cite[Chapter~9]{Atiyah-Macdonald-commutative-alg}. Note that $R_P$ is a discrete valuation ring. In particular, it follows from \cite[Proposition~5.1]{Rosen2002} that $\deg(x)=0$. 

We say two divisors $D_1$ and $D_2$ are \emph{linearly equivalent}, $D_1\sim D_2$ if $D_1-D_2=(x)$ for some $x\in K$. It is not hard to verify that the set $\mathcal{P}r:=\{(x): x\in K\}$ is a subgroup of $\cD$. Thus there is a natural bijection between the linearly equivalent classes of divisors and the quotient group $\cD/\mathcal{P}r$. Let $\mathcal{C}l_K:=\cD/\mathcal{P}r$. Then $\mathcal{C}l_K$ has a group structure induced by that of $\cD$. Moreover, there is a natural group homomorphism $\deg: \mathcal{C}l_K\to \Z, [D]\mapsto \deg D$, which sends every equivalent class $[D]$ of $D$ to $\deg D$. Note that this map is well-defined since all principal divisors have degree $0$. We take $\mathcal{C}l_K^0:=\ker \deg$, this subgroup of $\mathcal{C}l_K$ is called the \emph{class group} of $K$. It is a fact that $\mathcal{C}l_K^0$ is a finite group (\cite[Lemma~5.6]{Rosen2002}) and its size $h_K$ is called the \emph{class number} of $K$. 

Given a divisor $D$, we define its corresponding  \emph{Riemann-Roch space} $$L(D):=\{x\in K^*: (x)+D\in \mathcal{D}^+\}\cup \{0\}.$$ One can check that if $x, y\in L(D)$, then so are $x+y$ and $\alpha x$ for any $\alpha\in \F_q$. Hence $L(D)$ is a vector space over $\F_q$. According to \cite[$\S$~8.2, Proposition~3.(3)]{Fulton-Alg-Curve},  $l(D):=\dim L(D)$ is finite. Moreover, if $\deg(D)\geq 0$, then $l(D)\leq \deg(D)+1$.

\begin{theorem}\cite[Theorem~5.4, Rimann-Roch theorem]{Rosen2002}\label{Thm: RR}
	There is an integer $g_K\geq 0$ and a \emph{canonical divisor class} $\mathcal{C}$ such that for any $C\in \mathcal{C}$ and $D\in \cD$ we have 
	$$
	l(D)=\deg(D)-g_K+1+l(C-D). 
	$$
\end{theorem}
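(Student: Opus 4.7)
The plan is to follow the classical adelic (repartitions) approach used in Rosen's textbook, which is purely algebraic and avoids cohomological machinery. Let $\cA_K$ denote the restricted product $\prod'_P K_P$ of completions of $K$ at all prime divisors with respect to the local rings $\hat{R}_P$, viewed with the diagonal embedding $K\hookrightarrow \cA_K$. For each divisor $D=\sum_P a_P\cdot P$, introduce the $\F_q$-subspace $\cA(D):=\{(\alpha_P)\in \cA_K: \on{ord}_P(\alpha_P)\geq -a_P\text{ for all }P\}$, and observe that $L(D)=K\cap \cA(D)$.

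The first step is \emph{Riemann's inequality}: the defect $r(D):=\deg(D)+1-l(D)$ is bounded above by a nonnegative integer $g_K$ depending only on $K$. The key identity, proved by a short diagram chase using $L(D')\cap \cA(D)=L(D)$ whenever $D\leq D'$, reads
\begin{equation*}
\dim_{\F_q}\frac{\cA(D')+K}{\cA(D)+K}=r(D')-r(D).
\end{equation*}
Since the left-hand side is nonnegative, $r$ is monotone nondecreasing. Choosing a transcendental $x\in K$ with pole divisor $D_\infty$ and using the elements $1,x,\dots,x^n$ to bound $l(nD_\infty)$ from below against $\deg(nD_\infty)$, one then shows $r$ stabilizes to a finite supremum. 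Set $g_K:=\sup_D r(D)$; from $r(0)=0$ we get $g_K\in \Z_{\geq 0}$ and $l(D)\geq \deg(D)+1-g_K$ for every $D$.

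To identify the correction term as $l(C-D)$, introduce the space $\Omega_K$ of \emph{Weil differentials}, i.e., $\F_q$-linear functionals $\omega:\cA_K\to \F_q$ vanishing on $K$ and on some $\cA(D)$. For each nonzero $\omega$ the collection of $D$ on which $\omega$ vanishes has a unique maximal element, yielding a well-defined divisor $(\omega)$. The main technical obstacle is to show that $\Omega_K$ is one-dimensional as a $K$-module under $(x\cdot\omega)(\alpha):=\omega(x\alpha)$: if $\omega_1,\omega_2\in \Omega_K$ were $K$-linearly independent, one could construct from an $\F_q$-basis $\{x_i\}\subset L(D_1)$ for $D_1$ of large degree a collection of differentials $\{x_i\omega_1\}\cup \{x_i\omega_2\}$ which are $\F_q$-independent and all vanish on a common $\cA(D_2)$, producing growth in the dual spaces incompatible with Riemann's inequality applied to $D_2$.

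Once one-dimensionality is in hand, any nonzero $\omega_0\in \Omega_K$ determines the canonical class $\mathcal{C}:=[(\omega_0)]$, independent of the choice. The evaluation pairing restricts to an $\F_q$-isomorphism $\Omega_K(D)\cong \on{Hom}_{\F_q}(\cA_K/(K+\cA(D)),\F_q)$, where $\Omega_K(D)$ is the subspace of differentials vanishing on $\cA(D)$, and the $K$-action identifies $\Omega_K(D)$ with $L(C-D)$ for any $C\in \mathcal{C}$. Combined with the equality $\dim_{\F_q}\cA_K/(K+\cA(D))=g_K+l(D)-\deg(D)-1$ obtained by passing to the limit in the key identity above, this yields the Riemann-Roch formula $l(D)=\deg(D)-g_K+1+l(C-D)$.
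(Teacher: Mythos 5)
Your sketch reproduces the standard ad\`elic (or ``repartitions'') proof of Riemann--Roch, which is precisely the route taken in Rosen's textbook that the paper cites for Theorem~\ref{Thm: RR}; the paper itself offers no proof, just the reference. Your outline is correct in structure: the key identity $\dim_{\F_q}(\cA(D')+K)/(\cA(D)+K)=r(D')-r(D)$, the stabilization argument via $1,x,\dots,x^n$ giving Riemann's inequality and the definition of $g_K$, the one-dimensionality of the Weil differential module over $K$, and the pairing identifying $\Omega_K(D)$ with $L(C-D)$ all match the standard development, so this is essentially the same approach as the cited source.
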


\begin{remark}\label{Rmk: genus}
	The constant $g_K$ in the above theorem only depends on $K$. It is called the \emph{genus} of $K$. In fact, we have $l(C)=g_K$ and $\deg(C)=2g_K-2$ for any $C\in \mathcal{C}$ (see \cite[Corollaries of Theorem~5.4]{Rosen2002}).
\end{remark}

Here we cite a result which will be used in the other sections. 
\begin{proposition}\cite[Lemma~5.8]{Rosen2002}\label{Prop: formula_of_bn}
	For every integer $n$, there are $h_K$ divisor classes of degree $n$. Suppose $n\geq 0$ and that $\{A_1, \cdots, A_{h_K}\}$ are the representatives of the classes, then the number of effective divisors of degree $n$, $b_n$, is given by $\sum_{i=1}^{h_K}\frac{q^{l(A_i)}-1}{q-1}$. 
\end{proposition}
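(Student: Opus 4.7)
The plan has two parts matching the two assertions. For the first, I would analyze the short exact sequence
\[
0 \to \mathcal{C}l_K^0 \to \mathcal{C}l_K \xrightarrow{\deg} \Z \to 0,
\]
and show that the degree map is surjective. The image of $\deg$ is an ideal of $\Z$, necessarily of the form $d\Z$, so the claim reduces to $d=1$; this is F. K. Schmidt's theorem, equivalently the existence of a divisor of degree one, and is standard for function fields with finite constant field. Once surjectivity is in hand, every fiber $\deg^{-1}(n)$ is a coset of $\mathcal{C}l_K^0$, so it contains exactly $h_K$ elements.

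For the second assertion, I would fix $n\geq 0$ and, for each of the $h_K$ classes of degree $n$, choose a representative $A_i$, then count effective divisors lying in $[A_i]$ using the Riemann-Roch space. Every effective divisor $D$ equivalent to $A_i$ has the form $D=A_i+(f)$ for some $f\in K^*$, and the condition $D\geq 0$ is precisely $(f)+A_i\in\cD^+$, i.e., $f\in L(A_i)\setminus\set{0}$. Conversely, every nonzero $f\in L(A_i)$ yields such a $D$, giving a surjection
\[
L(A_i)\setminus\set{0} \longrightarrow \set{D\in \cD^+ : D\sim A_i},\qquad f\mapsto A_i+(f).
\]
Two nonzero $f,g\in L(A_i)$ produce the same divisor precisely when $(f/g)=0$, and since a principal divisor vanishes if and only if the function is a nonzero constant, each fiber has size $q-1$. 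Hence the class $[A_i]$ contains exactly $(\abs{L(A_i)}-1)/(q-1)=(q^{l(A_i)}-1)/(q-1)$ effective divisors, and summing over $i=1,\dots,h_K$ produces the claimed formula for $b_n$.

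The one genuinely nontrivial input is the surjectivity of $\deg$, which is where the finite-field hypothesis enters; everything else is direct bookkeeping with Riemann-Roch spaces, combined with the basic fact (already used in the background section) that an element of $K^*$ has trivial principal divisor if and only if it lies in $\F_q^*$.
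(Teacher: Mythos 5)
Your proof is correct and follows essentially the standard argument; the paper itself offers no proof, citing Rosen's Lemma 5.8, and the proof in Rosen proceeds exactly along these lines. You correctly isolate the one nontrivial ingredient for the first assertion—surjectivity of the degree map on $\mathcal{C}l_K$, i.e., F.~K.~Schmidt's theorem that a function field over a finite constant field admits a divisor of degree one—after which each fiber of $\deg$ is a coset of $\mathcal{C}l_K^0$ and hence has $h_K$ elements. For the second assertion, the bijection between effective divisors in the class $[A_i]$ and $\bigl(L(A_i)\setminus\{0\}\bigr)/\F_q^*$ is the right mechanism, and your justification that the fibers have size $q-1$ rests precisely on the fact (already recorded in the paper's background) that a nonzero element of $K$ has vanishing principal divisor if and only if it is a nonzero constant. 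Summing $\frac{q^{l(A_i)}-1}{q-1}$ over the $h_K$ classes gives $b_n$. No gaps.
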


\begin{corollary}\label{Cor: bn_for_large_n}
	If $n>2g_K-2$, then $b_n=h_K\frac{q^{n-g_K+1}-1}{q-1}$. 
\end{corollary}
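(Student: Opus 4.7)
The plan is to combine the exact formula $b_n = \sum_{i=1}^{h_K} \frac{q^{l(A_i)}-1}{q-1}$ from the preceding Proposition with the Riemann--Roch theorem (Theorem~\ref{Thm: RR}) in order to pin down $l(A_i)$ uniformly across all the class representatives $A_1,\dots,A_{h_K}$ of degree $n$. The hypothesis $n>2g_K-2$ is precisely what is needed to force the ``error term'' $l(C-A_i)$ in Riemann--Roch to vanish, after which the formula becomes a constant summed $h_K$ times.

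Concretely, fix a canonical divisor $C$, so by Remark~\ref{Rmk: genus} we have $\deg C = 2g_K - 2$. For each representative $A_i$ of a degree-$n$ class, Riemann--Roch gives
\begin{equation*}
    l(A_i) = \deg(A_i) - g_K + 1 + l(C - A_i) = n - g_K + 1 + l(C - A_i).
\end{equation*}
The first step is then to observe that $\deg(C - A_i) = 2g_K - 2 - n < 0$ by the hypothesis $n > 2g_K - 2$. The second step is the standard fact that any divisor of negative degree has trivial Riemann--Roch space: if $0\neq x\in L(C-A_i)$, then $(x)+(C-A_i)\geq 0$, and since $\deg(x)=0$ we would get $\deg(C-A_i)\geq 0$, a contradiction. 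Hence $l(C-A_i)=0$ and $l(A_i) = n-g_K+1$ for every $i$.

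Plugging this back into the Proposition's formula yields
\begin{equation*}
    b_n = \sum_{i=1}^{h_K} \frac{q^{n-g_K+1}-1}{q-1} = h_K\cdot\frac{q^{n-g_K+1}-1}{q-1},
\end{equation*}
which is the desired identity. There is no serious obstacle here; the only subtlety is the small lemma that $l(D)=0$ when $\deg D<0$, and this is immediate from the definition of the Riemann--Roch space together with the fact that principal divisors have degree zero (cited just before Theorem~\ref{Thm: RR}).
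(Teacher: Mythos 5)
Your proof is correct and follows exactly the same route as the paper: apply Riemann--Roch with a canonical divisor $C$, use $n>2g_K-2$ to force $\deg(C-A_i)<0$ and hence $l(C-A_i)=0$, and then sum the resulting constant $l(A_i)=n-g_K+1$ over the $h_K$ representatives. The only difference is that you spell out the elementary lemma that a divisor of negative degree has trivial Riemann--Roch space, which the paper leaves implicit.
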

\begin{proof}
	Apply the Riemann-Roch theorem to the case $n=\deg A_i>2g_K-2$ for all $i=1, \cdots, h_K$. Note that $L(C-A_i)=0$ since $\deg(C-A_i)<0$ (see Remark~\ref{Rmk: genus}). This mean $l(C-A_i)=0$, and $l(A_i)=\deg A_i-g_K+1$ for all $i$. Then this corollary follows immediately. 
\end{proof}

\subsection{Galois extensions}\label{Sect: Gal_ext}
In this subsection, we consider the extensions of function fields. Let $K$ be a function field  with constant field $F$ and let $L/K$ be a finite extension. Take $E$ to be the constant field of $L$, it is easy to see that $E$ is a finite extension of $F$. If $E=F$, we call $L$ a \emph{geometric extension} of $K$. In this and the following sections, we will focus on the Galois extension $L/K$, i.e. on the case $\#\Gal(L/K):={\rm{Aut}}(L/K)=[L:K]$.

Until the end of this subsection, we will fix a Galois extension $L/K$ of degree $n$, with the Galois group $G:=\Gal(L/K)$. Since $K$ is a subfield of $L$, for every prime divisor $\fP\in \cD_L$ in the divisor group of $L$, it is a fact that $R_{\fP}\cap K=R_P$ for a unique $P\in \cD_K$.  In this case, we say that $\fP$ is \emph{lying above} (or simply \emph{above}) $P$ and write $\fP|P$. Conversely, given a prime divisor $P\in \cD_K$, there are finitely many distinct prime divisors $\fP_1, \cdots, \fP_r$ lying above $P$, in particular, $r\leq n$. Now taking $\fP|P$, one can verify that $R_P$ is a subring of $R_{\fP}$ via the embedding $K\hookrightarrow L$. There are two integers related. The first is the \emph{relative degree} $f:=f(\fP/P)$, which is defined to be the extension degree of the residue fields $\kappa_{\fP}/\kappa_P$. Here $\kappa_{\fP}$ (resp.~$\kappa_P$) is the residue field of $R_{\fP}$ (resp.~$R_P$). The second integer is the \emph{ramification index} $e:=e(\fP/P)$, which is defined to be ${\rm{ord}}_{\fP}(\tau_P)$. Here $\tau_P$ is the uniformizer of the discrete valuation ring $R_P$. In particular, we say that $P$ is \emph{unramified} if $e(\fP/P)=1$ for one (and hence for all, see Proposition~\ref{Prop: efr=n} below) $\fP|P$, otherwise, we say $P$ is \emph{ramified}. The following results are well known. 

\begin{proposition}\cite[Propositioin~9.3]{Rosen2002}\label{Prop: efr=n}
	When $L/K$ is Galois, $e(\fP/P), f(\fP/P)$ and the integer $r$ above only depend on $P$. Moreover, $efr=n$. 
\end{proposition}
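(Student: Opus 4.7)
The plan is to exploit the natural action of $G=\Gal(L/K)$ on the set of primes of $L$ lying above $P$, use transitivity of this action to show the invariance of $e$ and $f$, and then invoke the general ramification-residue identity $\sum_{\fP\mid P}e(\fP/P)f(\fP/P)=n$ for Galois extensions of function fields.

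First I would verify that $G$ acts on the set $\{\fP_1,\dots,\fP_r\}$ of primes above $P$. For any $\sigma\in G$ and $\fP\mid P$, the image $\sigma(R_{\fP})$ is a discrete valuation ring of $L$ with maximal ideal $\sigma(\fP)$, and since $\sigma$ restricts to the identity on $K$ one has $\sigma(R_{\fP})\cap K=R_{\fP}\cap K=R_P$, so $\sigma(\fP)\mid P$. Next I would prove transitivity, which I expect to be the main obstacle. Suppose towards contradiction that $\fP$ and $\fP'$ lie above $P$ but are in distinct $G$-orbits. By weak approximation (equivalently, the Chinese Remainder Theorem applied to the finitely many distinct primes $\sigma(\fP')$, $\sigma\in G$, together with $\fP$), there exists $x\in L$ with $x\equiv 0\pmod{\fP}$ and $x\equiv 1\pmod{\sigma(\fP')}$ for every $\sigma\in G$. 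Form the norm
\[
y=N_{L/K}(x)=\prod_{\sigma\in G}\sigma(x)\in K.
\]
Since $x\in\fP$ and $\fP$ is an ideal of $R_{\fP}$, the product $y$ lies in $\fP$, so $y\in\fP\cap K=P\subset\fP'$. On the other hand, for each $\sigma\in G$ the congruence $x\equiv 1\pmod{\sigma^{-1}(\fP')}$ gives, after applying $\sigma$, that $\sigma(x)\equiv 1\pmod{\fP'}$; multiplying over $\sigma$ yields $y\equiv 1\pmod{\fP'}$, contradicting $y\in\fP'$.

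Once transitivity is in hand, the invariance of $e$ and $f$ is straightforward. If $\sigma(\fP_i)=\fP_j$, then $\sigma$ induces a ring isomorphism $R_{\fP_i}\to R_{\fP_j}$ carrying $\fP_i$ to $\fP_j$, hence an isomorphism of residue fields $\kappa_{\fP_i}\cong\kappa_{\fP_j}$ over $\kappa_P$, giving $f(\fP_i/P)=f(\fP_j/P)$. Writing a uniformizer $\tau_P$ of $R_P$ as $u\pi_i^{e_i}$ for $\pi_i$ a uniformizer at $\fP_i$ and applying $\sigma$ (which fixes $\tau_P$) yields the analogous factorization at $\fP_j$, so $e(\fP_i/P)=e(\fP_j/P)$. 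Hence the common values $e=e(\fP/P)$ and $f=f(\fP/P)$ depend only on $P$.

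Finally, I would cite the standard identity
\[
\sum_{i=1}^{r}e(\fP_i/P)\,f(\fP_i/P)=[L:K]=n,
\]
which holds for any finite separable extension of function fields since $R_P$ is a DVR and its integral closure in $L$ is a semilocal Dedekind domain (so one may localize and apply the usual rank computation for the free $R_P$-module of rank $n$ modulo $P$). Plugging in that all summands equal $ef$ and the number of summands is $r$ gives $ref=n$, completing the proof. The only genuinely nontrivial step is the transitivity argument via the norm trick; the rest is formal once one has the $\sum e_if_i=n$ identity.
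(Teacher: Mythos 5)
Your argument is correct and is essentially the standard proof that Rosen gives (the paper itself only cites \cite[Prop.~9.3]{Rosen2002} without reproducing a proof): show the $G$-action on the primes above $P$ is transitive via the norm trick, deduce that $e$ and $f$ are constant along the orbit, and then invoke the fundamental identity $\sum_i e_i f_i = n$. One small point worth tightening: in the transitivity step you should carry out the weak approximation and the norm computation inside the integral closure $B$ of $R_P$ in $L$ (a semilocal Dedekind domain whose maximal ideals correspond to the $\fP_i$), so that $x\in B$, hence every $\sigma(x)\in B$, which is what guarantees $y=N_{L/K}(x)$ lies in $R_P$ and that the product of the congruences $\sigma(x)\equiv 1\pmod{\fP'}$ is legitimate; as written, an arbitrary $x\in L$ satisfying the valuation conditions need not have $\sigma(x)$ integral at $\fP'$. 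With that adjustment the proof is complete and matches the cited source's approach.
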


\begin{proposition}\cite[Theorem~7.12]{Rosen2002}
	Given a Galois extension $L/K$, there is a \emph{different divisor} $D_{L/K}\in \cD_L$ such that $P$ is ramified in $L$ if and only if there is any (and hence every) $\fP|P$ such that $\fP|D_{L/K}$. In particular, there are only finitely many ramified prime divisors of $L/K$. 
\end{proposition}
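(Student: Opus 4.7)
The plan is to construct $D_{L/K}$ prime-by-prime via a local trace pairing, verify the ramification criterion by a local computation at each prime, and establish finiteness globally via a discriminant argument using a primitive element of $L/K$.

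First, since $L/K$ is Galois it is separable, so the trace form $\on{Tr}_{L/K}\colon L\times L\to K$ is non-degenerate. For each prime $\fP$ of $L$ above $P$ of $K$, pass to the completions $\wh L_{\fP}\supseteq \wh K_P$ with valuation rings $\wh R_{\fP}\supseteq \wh R_P$, and introduce the local codifferent
$$\mathfrak{C}_{\fP/P}:=\set{x\in \wh L_{\fP}: \on{Tr}_{\wh L_{\fP}/\wh K_P}(x\cdot \wh R_{\fP})\subseteq \wh R_P}.$$
Non-degeneracy of the trace forces $\mathfrak{C}_{\fP/P}$ to be a fractional $\wh R_{\fP}$-ideal containing $\wh R_{\fP}$, so its inverse is an integral ideal of the form $\fP^{d_{\fP/P}}$ with $d_{\fP/P}\geq 0$. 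I would then define $D_{L/K}:=\sum_{\fP}d_{\fP/P}\cdot\fP$; once finiteness is established this is a valid element of $\cD_L$, and by Proposition~\ref{Prop: efr=n} the exponents $d_{\fP/P}$ depend only on $P$ in the Galois setting.

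Second, I would establish the equivalence $d_{\fP/P}=0$ if and only if $\fP/P$ is unramified. For the unramified direction, if $e(\fP/P)=1$, the residue extension $\kappa_{\fP}/\kappa_P$ of finite fields is separable and Galois, so the residue trace is surjective; Nakayama's lemma lifts this to $\on{Tr}_{\wh L_{\fP}/\wh K_P}(\wh R_{\fP})=\wh R_P$, giving $\mathfrak{C}_{\fP/P}=\wh R_{\fP}$ and $d_{\fP/P}=0$. For the ramified direction, if $e:=e(\fP/P)\geq 2$, a uniformizer computation in $\wh R_{\fP}$ (using that the induced residue trace factors through multiplication by $e$ in a suitable sense) shows that $\on{Tr}_{\wh L_{\fP}/\wh K_P}(\wh R_{\fP})\subsetneq \wh R_P$, which by duality produces an element of $\mathfrak{C}_{\fP/P}$ outside $\wh R_{\fP}$, forcing $d_{\fP/P}\geq 1$.

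Third, for the finiteness assertion I would pick a primitive element $\theta\in L$ for $L/K$ (which exists by separability), with monic minimal polynomial $f(x)\in K[x]$ and discriminant $\Delta:=\on{disc}(f)\in K^\times$, nonzero by separability. A standard Kummer-type local argument shows that whenever $P$ is a prime of $K$ at which $\theta$ and every coefficient of $f$ is integral and moreover $\on{ord}_P(\Delta)=0$, the reduction $\overline{f}(x)\in\kappa_P[x]$ is squarefree, and the factorization of $\overline{f}$ then mirrors the splitting of $P$ in $L$ with every ramification index equal to $1$. Since the principal divisor $(\Delta)$ and the principal divisors of $\theta$ and of the coefficients of $f$ all have finite support, only finitely many primes of $K$ can ramify, and hence $D_{L/K}$ has finite support. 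The step I expect to require the most care is the ramified implication in wild characteristic (when $p\mid e(\fP/P)$): the clean tame formula $d_{\fP/P}=e-1$ no longer holds and one typically invokes higher ramification groups to get an exact value. However, since the proposition only asks for $d_{\fP/P}\geq 1$, it suffices to verify that $\on{Tr}_{\wh L_{\fP}/\wh K_P}(\wh R_{\fP})\subsetneq \wh R_P$ whenever $e\geq 2$, which the modulo-$\fP$ analysis delivers uniformly in both the tame and wild cases.
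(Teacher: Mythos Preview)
The paper does not prove this proposition; it is quoted verbatim as background from Rosen's textbook, so there is no argument in the paper to compare against. Your overall architecture---define $D_{L/K}$ via the local codifferent, verify the ramification criterion locally, and deduce finiteness from the discriminant of a primitive element---is the standard one and is sound in outline.

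There is, however, a genuine error in your ramified direction. You assert that $\on{Tr}_{\wh L_{\fP}/\wh K_P}(\wh R_{\fP})\subsetneq \wh R_P$ whenever $e\ge 2$, and that ``the modulo-$\fP$ analysis delivers'' this ``uniformly in both the tame and wild cases.'' This is false in the tame case: for a totally tamely ramified extension of degree $e$ with $p\nmid e$ one has $\on{Tr}(1)=e$, which is a unit in $\wh R_P$, so $\on{Tr}(\wh R_{\fP})=\wh R_P$. The same example shows that your unramified implication is a non sequitur as written: from $\on{Tr}(\wh R_{\fP})=\wh R_P$ alone you cannot conclude $\mathfrak{C}_{\fP/P}=\wh R_{\fP}$, since the tamely ramified example has surjective trace but codifferent $\fP^{-(e-1)}\supsetneq \wh R_{\fP}$. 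Surjectivity of the trace map on integers simply does not detect tame ramification. The fix is to work with the full trace \emph{pairing} rather than just the image of $\on{Tr}$: in the unramified case lift a residue-field basis and check that the discriminant matrix $\bigl(\on{Tr}(e_ie_j)\bigr)$ has unit determinant, forcing the pairing to be perfect and $\mathfrak{C}_{\fP/P}=\wh R_{\fP}$; in the ramified case pass to the totally ramified part, write $\wh R_{\fP}=\wh R_{P'}[\varpi]$ for a uniformizer $\varpi$ with Eisenstein minimal polynomial $g$, and compute $\on{ord}_{\fP}\bigl(g'(\varpi)\bigr)\ge e-1\ge 1$, which is what actually yields $d_{\fP/P}\ge 1$.
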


Now we focus on the group structure of $G=\Gal(L/K)$. Still taking $\fP|P$, there is a subgroup  $G_{\fP}:=\{\sigma\in G: \sigma(\fP)=\fP\}$ consists of all Galois elements which preserve $\fP$, it is called the \emph{decomposition group} of $\fP$. With the above notation, we know that $\#G_{\fP}=ef$. Moreover, there is a naturally defined group homomorphism $G_{\fP}\to \Gal(\kappa_{\fP}/\kappa_P)$ induced its action on the residue field $\kappa_{\fP}$. The kernel of this homomorphism will be called the \emph{inertia subgroup} of $\fP$, and is denoted by $I_{\fP}$. In fact, $\#I_{\fP}=e$. Thus $P$ is unramified if and only if $I_{\fP}$ is trivial for one (and hence all) $\fP|P$. Moreover in this case, we have $G_{\fP}\simeq \Gal(\kappa_{\fP}/\kappa_P)$. For more details about the decomposition subgroup and inert subgroup, see for example \cite[Chapter~9]{Rosen2002}.

Assume $P$ is unramified and let $\fP|P$. As we have seen above, $G_{\fP}\simeq \Gal(\kappa_{\fP}/\kappa_P)$. Observe that $\kappa_{\fP}/\kappa_P$ is an extension of finite field, thus has cyclic Galois group with the generator $\eta_P$ such that $\eta_P(\alpha)=\alpha^{|P|}$ for all $\alpha\in \kappa_{\fP}$. We take the notation $(\fP, L/K)$ to represent the unique element in $G_{\fP}$ which is equal to $\eta_P$ via the isomorphism $G_{\fP}\simeq \Gal(\kappa_{\fP}/\kappa_P)$. Thus under this isomorphism, we have $(\fP, L/K)(\alpha)=\alpha^{|P|}$. This element is called the \emph{Frobenius element} of $\fP$. 

\begin{proposition}\cite[Propositions~9.2, 9.7]{Rosen2002}
	Let $\set{\fP_1, \cdots, \fP_r}$ be the set of prime divisors above $P$. Then the Galois group $G$ acts transitively on this set. Moreover, let $\sigma\in G$, if $\sigma(\fP_i)=\fP_j$, then $\sigma G_{\fP_i}\sigma^{-1}=G_{\fP_j}$ and $\sigma I_{\fP_i}\sigma^{-1}=I_{\fP_j}$. In particular, if $P$ is unramified, then $\sigma (\fP_i, L/K)\sigma^{-1}=(\fP_j, L/K)$.  
\end{proposition}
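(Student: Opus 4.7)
The plan is to prove the three parts in order: transitivity, conjugation of $G_{\fP}$ and $I_{\fP}$, then the Frobenius statement as a direct corollary.

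For the transitivity step, I would argue by contradiction via a norm trick. Suppose some $\fP_j$ is not in the $G$-orbit of $\fP_1$. The primes $\fP_1, \ldots, \fP_r$ are pairwise comaximal (distinct maximal ideals in the integral closure of $R_P$ inside $L$), so by the Chinese Remainder Theorem / weak approximation I can pick $x \in L$ lying in $\fP_j$ but not in $\sigma(\fP_1)$ for any $\sigma \in G$. Consider $N_{L/K}(x) = \prod_{\sigma \in G} \sigma(x) \in K$. Since $x \in \fP_j$, the norm lies in $\fP_j \cap K = P$, hence in $\fP_1$. On the other hand, if some factor $\sigma(x)$ lay in $\fP_1$ then $x$ would lie in $\sigma^{-1}(\fP_1)$, contradicting the choice of $x$; since $\fP_1$ is prime, $N_{L/K}(x) \not\in \fP_1$, a contradiction. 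Hence the action is transitive. I expect this to be the main obstacle, since it is the only step that is not purely formal — it hinges on having a usable CRT/approximation statement together with the right norm computation, both of which need to be cited carefully from the references already invoked in Section~\ref{Sect: Background}.

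For the second assertion, the conjugation of decomposition groups is a formal manipulation. If $\sigma(\fP_i) = \fP_j$, then for any $\tau \in G$ we have
\[
\tau \in G_{\fP_j} \iff \tau(\fP_j) = \fP_j \iff (\sigma^{-1}\tau\sigma)(\fP_i) = \fP_i \iff \sigma^{-1}\tau\sigma \in G_{\fP_i},
\]
which gives $\sigma G_{\fP_i}\sigma^{-1} = G_{\fP_j}$. For the inertia subgroups, I would observe that $\sigma$ restricts to a ring isomorphism $R_{\fP_i} \to R_{\fP_j}$ taking $\fP_i$ to $\fP_j$ and fixing $R_P$ pointwise, so it descends to a $\kappa_P$-isomorphism $\bar{\sigma}\colon \kappa_{\fP_i} \to \kappa_{\fP_j}$. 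Under the identifications $G_{\fP_i} \twoheadrightarrow \Gal(\kappa_{\fP_i}/\kappa_P)$ and $G_{\fP_j} \twoheadrightarrow \Gal(\kappa_{\fP_j}/\kappa_P)$, the action of $\sigma\tau\sigma^{-1}$ on $\kappa_{\fP_j}$ corresponds, via $\bar{\sigma}$, to the action of $\tau$ on $\kappa_{\fP_i}$. Consequently $\tau$ acts trivially on $\kappa_{\fP_i}$ iff $\sigma\tau\sigma^{-1}$ acts trivially on $\kappa_{\fP_j}$, which yields $\sigma I_{\fP_i}\sigma^{-1} = I_{\fP_j}$.

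The Frobenius statement is then immediate. Assume $P$ is unramified, so $I_{\fP_i} = I_{\fP_j} = \{1\}$ and the reduction maps $G_{\fP_i} \xrightarrow{\sim} \Gal(\kappa_{\fP_i}/\kappa_P)$ and $G_{\fP_j} \xrightarrow{\sim} \Gal(\kappa_{\fP_j}/\kappa_P)$ are isomorphisms. The Frobenius $(\fP_i, L/K)$ is characterized as the unique element of $G_{\fP_i}$ inducing $\alpha \mapsto \alpha^{|P|}$ on $\kappa_{\fP_i}$. By the compatibility above, $\sigma(\fP_i, L/K)\sigma^{-1} \in G_{\fP_j}$ induces on $\kappa_{\fP_j}$ the map $\bar\sigma \circ (\alpha \mapsto \alpha^{|P|}) \circ \bar\sigma^{-1}$, which is again $\beta \mapsto \beta^{|P|}$ because the $|P|$-power Frobenius is a natural transformation on $\kappa_P$-algebras (it commutes with $\bar\sigma$). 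By the uniqueness characterization, $\sigma(\fP_i, L/K)\sigma^{-1} = (\fP_j, L/K)$, completing the proof.
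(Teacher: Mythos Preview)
Your argument is correct and follows the standard line (norm trick for transitivity, formal conjugation for $G_{\fP}$ and $I_{\fP}$, naturality of the $|P|$-power map for the Frobenius statement). Note, however, that the paper does not prove this proposition at all: it is stated in the background section with a citation to \cite[Propositions~9.2, 9.7]{Rosen2002} and no accompanying argument, consistent with the authors' remark that Section~\ref{Sect: Background} contains nothing original. So there is no ``paper's own proof'' to compare against; your write-up is simply a valid reconstruction of the cited result.
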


\begin{definition}\label{Def: Frob_P}
	With all the above setups, if $P$ is unramified, we define the \emph{Frobenius class} (or simply \emph{Frobenius}) of $P$ to be the conjugacy class of $(\fP, L/K)$ for any one (and hence for all) $\fP|P$. We denote the Frobenius class of $P$ by $(P, L/K)$. 
\end{definition}

Like the Chebotarev Density Theorem in the algebraic number field cases, we have the following results. 

\begin{theorem}\cite[Theorem~9.13A, Chebotarev Density Theorem, first version]{Rosen2002}\label{Thm: Chebotarev_A}
	Let $L/K$ be a Galois extension with $G=\Gal(L/K)$. Let $C\subseteq G$ be a conjugacy class in $G$ and let $S_K'$ be the set of unramified primes of $L/K$. Then 
	$$
	\delta(\set{P\in S_K': (P, L/K)=C})=\frac{\#C}{\#G}.
	$$
\end{theorem}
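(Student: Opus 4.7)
The plan is to prove this through the character-theoretic approach via Artin $L$-functions. For each irreducible complex character $\chi$ of $G=\Gal(L/K)$, introduce the Artin $L$-function $L(u,\chi,L/K)$; away from the finite set of ramified primes (which contribute zero to any density) its Euler product reads
$$
L(u,\chi,L/K) = \prod_{P\in\cP,\,\on{unram}} \of{1 - \chi((P,L/K))\,u^{\deg P}}^{-1} \cdot (\text{ramified factors}).
$$
These $L$-functions satisfy the Artin factorization $Z_L(u) = \prod_{\chi} L(u,\chi)^{\chi(1)}$, where $Z_L(u)$ is the zeta function of $L$. Two deep inputs drive the argument: (i) for every nontrivial $\chi$, $L(u,\chi)$ is a polynomial in $u$ of bounded degree (abelian class field theory for abelian $\chi$, Brauer induction in general), and (ii) its reciprocal roots all have absolute value $q^{1/2}$ (Weil's Riemann Hypothesis for the curve corresponding to $L$).

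Using class-function orthogonality, for any representative $c\in C$, $\mathbf{1}_C(g) = \tfrac{\#C}{\#G}\sum_{\chi\in\wh{G}} \ol{\chi(c)}\,\chi(g)$, so summing over unramified $P$ with $\deg P = n$ yields
$$
N_C(n) := \#\set{P\in S_K': \deg P=n,\ (P,L/K)=C} = \frac{\#C}{\#G}\sum_{\chi\in\wh{G}} \ol{\chi(c)}\,T_\chi(n),
$$
where $T_\chi(n) := \sum_{\deg P=n,\,\on{unram}} \chi((P,L/K))$. I would extract $T_\chi(n)$ from the logarithmic derivative $u\,\tfrac{d}{du}\log L(u,\chi) = \sum_n a_\chi(n)\,u^n$: one checks $a_\chi(n) = n\,T_\chi(n) + O(q^{n/2})$, the error absorbing the prime-power contributions. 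For the trivial character $\chi_0$, Lemma~\ref{Lem: prime_num_thm} gives $T_{\chi_0}(n) = \pi_K(n) = q^n/n + O(q^{n/2}/n)$. For nontrivial $\chi$, writing $L(u,\chi) = \prod_i (1 - \alpha_i u)$ with $\abs{\alpha_i} = q^{1/2}$ yields $\abs{a_\chi(n)} = \abs{\sum_i \alpha_i^n} \leq (\deg L(u,\chi))\cdot q^{n/2}$, whence $\abs{T_\chi(n)} = O(q^{n/2}/n)$.

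Combining the pieces, $N_C(n) = (\#C/\#G)\cdot q^n/n + O(q^{n/2}/n)$, and dividing by $\pi_K(n)\sim q^n/n$ gives $\delta(\set{P\in S_K': (P,L/K)=C}) = \#C/\#G$. The main obstacle is the pair of deep inputs above — both the polynomial nature of nontrivial Artin $L$-functions and Weil's Riemann Hypothesis for curves over finite fields are far from elementary. One clean route in the function-field setting bundles these together via étale cohomology: $L(u,\chi)$ equals the characteristic polynomial of geometric Frobenius on $H^1$ of a $\chi$-twisted $\ell$-adic sheaf on the associated curve (the $H^0$ and $H^2$ contributions vanishing for nontrivial $\chi$), while Deligne's purity theorem forces the eigenvalues to have absolute value $q^{1/2}$. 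Since Theorem~\ref{Thm: Chebotarev_A} is cited from \cite{Rosen2002}, one may of course simply appeal to Rosen's direct treatment rather than reproving it from scratch.
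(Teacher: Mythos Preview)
The paper does not prove this theorem at all: it is quoted verbatim from \cite{Rosen2002} as background, with no argument supplied. So there is no ``paper's own proof'' to compare against, and your closing remark---that one may simply appeal to Rosen---is in fact exactly what the paper does.

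That said, your sketch is worth a comment on its own merits. The character-theoretic route via Artin $L$-functions is precisely the standard proof (and is the one Rosen gives), so the overall strategy is correct. However, as written your argument actually establishes the \emph{second} version, Theorem~\ref{Thm: Chebotarev_B}, not Theorem~\ref{Thm: Chebotarev_A}. You tacitly assume $L/K$ is geometric when you invoke Weil's Riemann Hypothesis to force all reciprocal roots of the nontrivial $L(u,\chi)$ to lie on $\abs{u}=q^{-1/2}$. If $L/K$ has a nontrivial constant-field subextension $E/F$, then characters of $G$ that factor through $\Gal(E\cdot K/K)$ give $L$-functions which are \emph{not} polynomials with roots on the Weil circle; they pick up factors like $(1-\zeta q u)^{-1}$ for roots of unity $\zeta$, and your bound $\abs{T_\chi(n)}=O(q^{n/2}/n)$ fails for those $\chi$. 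Concretely, in a pure constant-field extension the primes with $(P,L/K)=C$ are exactly those whose degree lies in a fixed residue class modulo $[E:F]$, so $\pi_{K,S_C}(n)/\pi_K(n)$ oscillates between $0$ and $1$ and the natural density in the paper's sense does not even exist. Rosen's Theorem~9.13A is stated for Dirichlet density, which does exist and equals $\#C/\#G$ in that generality; the paper's $\delta$ is natural density, so strictly speaking the citation already carries this caveat. Your argument is a clean proof of the geometric case, which is all the paper ever uses (see Corollary~\ref{cordawsey_AFF}).
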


\begin{theorem}\cite[Theorem~9.13B, Chebotarev Density Theorem, second version]{Rosen2002}\label{Thm: Chebotarev_B}
	Let $L/K$ be a geometric Galois extension with $G=\Gal(L/K)$. Let $C\subseteq G$ be a conjugacy class in $G$ and let $S_K'$ be the set of unramified primes of $L/K$. Suppose the common constant field of $L$ and $K$ has $q$ elements. Then 
	$$
	\#\set{P\in S_K': \deg_K P=N, (P, L/K)=C}=\frac{\#C}{\#G}\frac{q^N}{N}+O\left(\frac{q^{N/2}}{N}\right).
	$$
\end{theorem}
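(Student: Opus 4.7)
The plan is to follow the classical analytic approach via Artin $L$-functions, reducing the count to character-twisted prime sums for $K$ and then invoking Weil's Riemann hypothesis for curves over finite fields. First, using orthogonality of irreducible characters $\chi$ of $G$, for any $c\in C$ one has
\begin{equation*}
\mathbf{1}_{C}(g)=\frac{\#C}{\#G}\sum_{\chi}\overline{\chi(c)}\,\chi(g),
\end{equation*}
so that
\begin{equation*}
\#\{P\in S_K':\deg_K P=N,\,(P,L/K)=C\}=\frac{\#C}{\#G}\sum_{\chi}\overline{\chi(c)}\,\pi_\chi(N),
\end{equation*}
where $\pi_\chi(N):=\sum_{P\in S_K',\,\deg_K P=N}\chi((P,L/K))$. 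The trivial character contributes $\frac{\#C}{\#G}\pi_K(N)$ up to an $O(1)$ error from the finitely many ramified primes, and by Lemma~\ref{Lem: prime_num_thm} this equals $\frac{\#C}{\#G}\frac{q^N}{N}+O(q^{N/2}/N)$. The task is therefore to show $\pi_\chi(N)=O(q^{N/2}/N)$ for every non-trivial $\chi$.

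For this, I would introduce the Artin $L$-function
\begin{equation*}
L(s,\chi,L/K)=\prod_{P\text{ unram.}}\det\bigl(I-\rho_\chi((P,L/K))\,|P|^{-s}\bigr)^{-1}\cdot\prod_{P\text{ ram.}}(\text{local factor}),
\end{equation*}
with $\rho_\chi$ any representation of character $\chi$. Writing $u=q^{-s}$, two deep inputs are required: (i) for $\chi\neq 1$, $L(s,\chi,L/K)$ is a polynomial in $u$; and (ii) each inverse root $\alpha_j$ satisfies $|\alpha_j|=q^{1/2}$. Both follow from the factorization $\zeta_L(s)=\prod_\chi L(s,\chi,L/K)^{\dim\rho_\chi}$ together with Weil's proof of the Riemann hypothesis for $\zeta_L(s)$. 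Taking the logarithmic derivative and extracting the coefficient of $u^N$ gives
\begin{equation*}
\psi_\chi(N):=\sum_{\substack{P\text{ unram.},\,m\geq 1\\ m\deg_K P=N}}\chi((P,L/K)^m)\,\deg_K P=-\sum_j\alpha_j^N,
\end{equation*}
and hence $|\psi_\chi(N)|\le(\deg L(\cdot,\chi,L/K))\cdot q^{N/2}$ when $\chi\neq 1$.

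It remains to pass from $\psi_\chi(N)$ to $N\,\pi_\chi(N)$. The contributions from pairs $(P,m)$ with $m\geq 2$ are bounded by $\dim\chi\cdot\sum_{d\mid N,\,d\leq N/2}d\,\pi_K(d)=O(q^{N/2})$ by Lemma~\ref{Lem: prime_num_thm}, while the finitely many ramified primes add $O(1)$. Therefore $N\,\pi_\chi(N)=\psi_\chi(N)+O(q^{N/2})=O(q^{N/2})$, giving $\pi_\chi(N)=O(q^{N/2}/N)$ as required. Summing against $\overline{\chi(c)}$ over the non-trivial characters and combining with the trivial-character main term yields the claimed asymptotic, with implied constant depending on $L/K$ through the degrees of the individual $L(s,\chi,L/K)$.

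The principal obstacle is the appeal to Weil's rationality and Riemann-hypothesis results for $\zeta_L(s)$, equivalently for each $L(s,\chi,L/K)$; these are deep theorems rather than elementary manipulations, and it is on this basis that Theorem~\ref{Thm: Chebotarev_B} is cited verbatim from \cite{Rosen2002} rather than reproved here. Once those inputs are granted, the remaining steps, character orthogonality, coefficient extraction from a logarithmic derivative, and pruning the prime-power and ramified contributions, are routine and each contributes errors no worse than $O(q^{N/2}/N)$.
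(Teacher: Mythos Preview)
Your outline is the standard Artin $L$-function argument and is correct in substance; however, the paper does not give its own proof of this theorem. Theorem~\ref{Thm: Chebotarev_B} appears in Section~\ref{Sect: Background} as a cited result from \cite[Theorem~9.13B]{Rosen2002}, with no proof or sketch supplied, so there is no ``paper's proof'' to compare against. You yourself note this at the end of your proposal.

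That said, your sketch is essentially the argument Rosen gives: reduce to bounding $\pi_\chi(N)$ for nontrivial $\chi$ via orthogonality, use the factorization $\zeta_L(s)=\prod_\chi L(s,\chi,L/K)^{\chi(1)}$ together with Weil's theorem to control the inverse roots of each nontrivial $L(s,\chi,L/K)$, and then pass from the von~Mangoldt-type sum $\psi_\chi(N)$ to $\pi_\chi(N)$ by stripping off prime powers and ramified primes. One small point worth tightening: the claim that every inverse root of $L(s,\chi,L/K)$ has absolute value exactly $q^{1/2}$ is slightly stronger than what you need and not quite what Weil gives directly for the Artin factors (some inverse roots can have absolute value $1$, coming from local factors at ramified primes); the correct and sufficient statement is that all inverse roots satisfy $|\alpha_j|\le q^{1/2}$, which still yields $|\psi_\chi(N)|\ll q^{N/2}$. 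With that adjustment your argument goes through.
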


\section{Duality between prime factors of divisors}\label{Sect: duality}

In this section, we show  a duality  between the prime factors of divisors of $K$. For a divisor $D\in \cD^+$, let $d^+(D):=\max\set{\deg P: P|D}$ be the largest degree of the prime factors of $D$, and $d^+(0)=0$ if $D=0$. Let $$Q_S(D):=\#\set{P\in S: \deg P=d^+(D), P|D}$$ be the number of prime factors of $D$ in $S$ attaining the maximal degree $d^+(D)$. For two divisors $A,B\in \cD^+$, we say $A\ge B$ (resp.~$A>B$) if $A-B$ is effective (resp.~effective and nonzero). Then we have an identity between $d_{-}(A)$ and $d^+(A)$ that is analogous to \eqref{duality1}.  Similarly, one may formalize a dual identity that is analogous to  \eqref{duality2}. But for the proof later, we only study the analogue of \eqref{duality1}.

\begin{lemma}[Duality Lemma]\label{duality}
	Suppose $f:\N\to\C$ is an arithmetic function with $f(0)=0$. Then for any effective divisor $A$,  the following identity holds
	\begin{equation}\label{dualityeq}
		\sum_{A\ge B}\mu(B)1_{\mathfrak{D}(K, S)}(B)f(d_{-}(B))=-Q_S(A)f(d^+(A)).
	\end{equation}
	Here $1_{\mathfrak{D}(K,S)}$ is the indicator function on $\mathfrak{D}(K,S)$, which is defined as in \eqref{Definition of distingushable}.
\end{lemma}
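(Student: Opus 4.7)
The plan is to mimic Alladi's original argument \eqref{duality1} in the divisor setting, exploiting the fact that $\mu(B)$ vanishes unless $B$ is squarefree (that is, unless all coefficients in $B = \sum a_P P$ equal $0$ or $1$). First, since only squarefree $B \leq A$ contribute, I restrict the sum on the left-hand side to subsets $T$ of the prime support $\mathrm{supp}(A) = \{P_1, \dots, P_k\}$ of $A$: writing $B_T := \sum_{P \in T} P$, the sum becomes
\begin{equation*}
\sum_{\emptyset \neq T \subseteq \mathrm{supp}(A)} (-1)^{|T|}\, 1_{\mathfrak{D}(K,S)}(B_T)\, f(d_-(B_T)).
\end{equation*}
(The $B=0$ term vanishes because $1_{\mathfrak{D}(K,S)}(0)=0$ by the definition \eqref{Definition of distingushable}.)

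Next, I reorganize this sum by the (unique) minimal-degree prime factor. For each prime $P \mid A$ with $P \in S$, the squarefree effective divisors $B = B_T \leq A$ for which $B$ is distinguishable with $P_{\min}(B) = P$ are exactly those $T$ of the form $T = \{P\} \cup T'$, where $T' \subseteq M(P) := \{P' \mid A : \deg P' > \deg P\}$. For such $T$, one has $d_-(B_T) = \deg P$ and $|T| = 1 + |T'|$, so the contribution to the left-hand side is
\begin{equation*}
-\sum_{\substack{P \mid A \\ P \in S}} f(\deg P) \sum_{T' \subseteq M(P)} (-1)^{|T'|}.
\end{equation*}

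The final step is the usual inclusion--exclusion collapse: the inner sum equals $(1-1)^{|M(P)|}$, which vanishes whenever $M(P) \neq \emptyset$ and equals $1$ when $M(P) = \emptyset$. The condition $M(P) = \emptyset$ is equivalent to $\deg P = d^+(A)$. Hence only those $P \mid A$ with $P \in S$ and $\deg P = d^+(A)$ survive; by the definition of $Q_S(A)$, there are exactly $Q_S(A)$ such primes, each contributing $-f(d^+(A))$, which yields the right-hand side of \eqref{dualityeq}.

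I do not anticipate serious obstacles: the identity is combinatorial, and the only things to check carefully are the boundary cases ($A = 0$, where both sides vanish by $f(0)=0$ and $1_{\mathfrak{D}(K,S)}(0)=0$; and $A \neq 0$ with $Q_S(A)=0$, where the inclusion--exclusion above produces $0$ on both sides) and the bijection between squarefree sub-divisors of $A$ and subsets of $\mathrm{supp}(A)$, which relies on the unique factorization of effective divisors into prime divisors implicit in the definition \eqref{Eqn: divisor}.
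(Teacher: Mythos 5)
Your proof is correct and rests on the same inclusion--exclusion collapse $\sum_{A\ge B}\mu(B)=[A=0]$ that drives the paper's argument. The only organizational difference is that the paper first splits $A=A_1+\cdots+A_k$ into degree classes and isolates the lowest-degree piece $B'$ of $B$, later observing that $B'$ must be a single prime, whereas you index directly on the unique minimal-degree prime $P$ and sum $(-1)^{|T'|}$ over $T'\subseteq M(P)$; both reductions force the surviving terms to be the $Q_S(A)$ primes of degree $d^+(A)$ lying in $S$.
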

\begin{proof} Here we follow the same spirits of the proof of \cite[Lemma 3.3]{SweetingWoo2019}. Observe that if $A=0$, then \eqref{dualityeq} automatically holds. Hence in the following we assume that $A>0$. Suppose we have the prime factorization $A=a_1P_1+\cdots a_rP_r$ with $\deg P_1\le\cdots\le\deg P_r$. Preserving ordering, we split $A$ into $A=A_1+\cdots+ A_k$, where all prime factors of $A_j$ have the same degree. Notice that for any $B\in\cD^+$, we have $1_{\mathfrak{D}(K,S)}(B)=1_{\mathfrak{D}(K,S)}(B')$, where $B'$ is the sum of all the prime factors of $B$ of the smallest degree $d_{-}(B)$. Note that $0\notin\mathfrak{D}(K, S)$ by the definition of $\mathfrak{D}(K, S)$ as in \eqref{Definition of distingushable}. Then
	$$\sum_{A\ge B}\mu(B)1_{\mathfrak{D}(K,S)}(B)f(d_{-}(B))=\sum_{j=1}^k\sum_{A_j\ge B'>0}\mu(B')1_{\mathfrak{D}(K,S)}(B')f(d_{-}(B')) \sum_{A_{j+1}+\cdots+A_k\ge B''}\mu(B'').$$
	
	It follows from the principle of inclusion-exclusion that
	\begin{equation}\label{inclusion-
			exclusion}
		\sum_{A\ge B}\mu(B)=\begin{cases}
			1, & \text{if } A=0,\\
			0, & \text{otherwise}.
		\end{cases}	
	\end{equation}
	It follows that only when $A_{j+1}+\cdots+A_k=0$, i.e. $A_j=A_k$, we have nonvanished term $\sum\limits_{A_{j+1}+\cdots+A_k\ge B''}\mu(B'')$. Thus we have 
	$$\sum_{A\ge B}\mu(B)1_{\mathfrak{D}(K,S)}(B)f(d_{-}(B))=\sum_{A_k\ge B'>0}\mu(B')1_{\mathfrak{D}(K,S)}(B')f(d_{-}(B')).$$
	Moreover, notice that under the condition $A_k\geq B'>0$, $\mu(B')1_{\mathfrak{D}(K,S)}(B')f(d_{-}(B'))\neq 0$ if and only if $B'$ is a prime and $B'\in \mathfrak{D}(K,S)$. In this case noting that $d_{-}(B')=d^+(A_k)=d^+(A)$ and using the definition of $Q_{S}$, we get 
	
	$$\sum_{A\ge B}\mu(B)1_{\mathfrak{D}(K,S)}(B)f(d_{-}(B))=\sum_{A_k\ge B'>0}\mu(B')1_{\mathfrak{D}(K,S)}(B')f(d_{-}(B'))=-Q_S(A)f(d^+(A)),$$
	which completes the proof.
\end{proof}

\section{Asymptotic estimate for the average of \texorpdfstring{$Q_S(A)$}{}}\label{Sec:largest_prime_divisor}

In this section, we mainly show an asymptotic formula for the average of $Q_S(A)$. The main technique for this is the estimation of the number of  $m$-smooth divisors of degree $n$, which is an analogue of the smooth numbers in $\N$. Similarly as Sect.~\ref{Sect: duality}, we only consider effective divisors in this section. For an effective divisor $D\in \cD^+$, we say $D$ is $m$-smooth if $\deg P\le m$ for all $P|D$. Let $$\cS(n,m):=\set{D\in \cD^+: \deg D=n, d^+(D)\le m}$$ be the set of all effective $m$-smooth divisors of degree $n$, and let $\Psi(n,m):=\#\cS(n,m)$. To find an estimate of $\Psi(n,m)$,  by Corollary~\ref{Cor: bn_for_large_n}  we see that 
\begin{equation}\label{averagecounting}
	\sum_{\deg A=n}1=b_n=c_Kq^n+O(1),
\end{equation}
where $c_K={h_Kq^{1-g_{K}}}/{(q-1)}>0$. Then by \cite[Theorem 2]{Manstavichyus1992}, for $1\le u=n/m\ll m/\log m$ and $m\ge m_0\ge3$ we have 
\begin{equation}\label{Psiasymeq}
	\Psi(n,m)=c_Kq^n\rho(u)\of{1+O\of{\frac{1+u\log u}{m}}},
\end{equation}
where $\rho(u)$ is the Dickman function  defined by the unique continuous solution of the  difference-differential equation $u\rho'(u)=-\rho(u-1)$ for $u>1$ with the initial condition $\rho(u)=1$ for $0\le u\le 1$.
Using $\rho(u)< 1/\Gamma(u+1)$,  we get the following estimate  from (\ref{Psiasymeq}).
\begin{lemma}\label{smoothbd}
	For $m\gg \sqrt{n\log n}$, we have that
	\begin{equation}
		\Psi(n,m)\ll q^n\exp\of{-\frac{n}{m}\log\of{\frac{n}{2m}}},
	\end{equation}	
	where the implied constant depends only on $K$.
\end{lemma}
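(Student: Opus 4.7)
Set $u := n/m$. The plan is to combine the asymptotic (\ref{Psiasymeq}) with the Dickman bound $\rho(u) < 1/\Gamma(u+1)$ and Stirling's approximation, as hinted in the paragraph preceding the lemma.

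First I would verify that the hypothesis $m \gg \sqrt{n \log n}$ places us within the regime of validity of (\ref{Psiasymeq}). From $m \gg \sqrt{n \log n}$ one has $u \ll \sqrt{n/\log n}$, whence $\log u \ll \log n$ and
\[
\frac{1 + u \log u}{m} \ll \frac{\sqrt{n/\log n}\cdot \log n}{m} = \frac{\sqrt{n \log n}}{m} = O(1).
\]
The range condition $u \ll m/\log m$ also holds because $m \le n$ gives $\log m \le \log n$, so $u\log m \ll \sqrt{n \log n} \ll m$. Plugging into (\ref{Psiasymeq}) and absorbing the constant $c_K$ and the $O(1)$ correction into the implied constant yields $\Psi(n,m) \ll q^n \rho(u)$.

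Next I would bound $\rho(u)$ by $\exp(-u \log(u/2))$. When $u \le 2$, $\log(u/2) \le 0$ so $\exp(-u\log(u/2)) \ge 1 \ge \rho(u)$, trivially. When $u > 2$, I would combine the hinted inequality $\rho(u) \le 1/\Gamma(u+1)$ with Stirling's formula to obtain $\rho(u) \ll \exp(-u\log(u/2))$ in the relevant range, which multiplied by $q^n$ gives the lemma.

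The step I expect to be the main obstacle is the Stirling estimate in the second part when $u$ is large: a naive Stirling bound gives $1/\Gamma(u+1) \ll (e/u)^u$, which exceeds the desired $(2/u)^u = \exp(-u\log(u/2))$ by a factor of $(e/2)^u$. Closing this gap requires either a sharper Dickman-function decay (e.g., the Hildebrand-type bound $\rho(u) \le \exp(-u(\log u + \log\log u - 1))$, whose $\log\log u$ term eventually dominates $\log 2$) or a case split on the size of $u$. Everything else reduces to bookkeeping, and the key arithmetic role of the hypothesis $m \gg \sqrt{n\log n}$ is already visible in the bound on the error term of (\ref{Psiasymeq}) in the first step.
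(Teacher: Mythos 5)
Your proposal follows the paper's own route exactly (the paper offers nothing beyond the one-line remark ``Using $\rho(u)<1/\Gamma(u+1)$, we get the following estimate from \eqref{Psiasymeq}''), and you have correctly identified that this route, as stated, has a genuine gap. Stirling's lower bound $\Gamma(u+1)\ge(u/e)^u$ yields only
\[
\rho(u)\le\frac{1}{\Gamma(u+1)}\le\left(\frac{e}{u}\right)^{u}=\exp\!\left(-u\log\frac{u}{e}\right),
\]
which is weaker than the target $\exp(-u\log(u/2))=(2/u)^u$ by the factor $(e/2)^u$. Since in the regime $m\gg\sqrt{n\log n}$ the quantity $u=n/m$ is not bounded --- indeed the paper later takes $m=[\sqrt{n\log n}]$, making $u\asymp\sqrt{n/\log n}\to\infty$ --- this factor cannot be absorbed into an implied constant, so the inequality $\rho(u)<1/\Gamma(u+1)$ is by itself insufficient for the lemma as written.

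Your proposed repair is the right one: the sharper de Bruijn/Hildebrand-type estimate $\log\rho(u)=-u\bigl(\log u+\log\log u-1+o(1)\bigr)$ does give $\rho(u)\ll\exp(-u\log(u/2))$ once $\log\log u>1-\log 2$, and the remaining bounded range of $u$ is handled by adjusting the constant (your case $u\le 2$ is trivial as you note). Your preliminary verification that $m\gg\sqrt{n\log n}$ forces the error factor $(1+u\log u)/m$ in \eqref{Psiasymeq} to be $O(1)$, and places $u\ll m/\log m$, is also correct. One small addendum worth recording: the weaker estimate $\Psi(n,m)\ll q^n\exp(-n/m)$, which is all that the paper actually invokes in the sequel (where $m\le n/10$, i.e.\ $u\ge 10$), \emph{does} follow from $\rho(u)\le 1/\Gamma(u+1)\le(e/u)^u$, because $(e/u)^u\le e^{-u}$ as soon as $u\ge e^2$. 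So the overstated form of Lemma~\ref{smoothbd} is a cosmetic defect rather than a flaw that propagates into Lemmas~\ref{average_of_QSA} and~\ref{equidistribution}.
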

From now on, and until the end of this section, we will assume  $m\le n/10$ and use  $\Psi(n,m)\ll q^n\exp\of{-{n}/{m}}$ for simplicity when applying Lemma~\ref{smoothbd}. Before estimating the average of $Q_S(A)$, we write $\sum_{\deg A=n}Q_S(A)$ in terms of $\Psi(n,m)$ as follows:
\begin{align}\label{qsfphi}
	\sum_{\deg A=n}Q_S(A)&= \sum_{\substack{\deg (P+B)= n\\ P\in S,d^+(B)\le\deg P}}1\nonumber\\
	&=\sum_{\substack{\deg P\le n\\ P\in S}}\sum_{\substack{\deg B=n-\deg P\\ d^+(B)\le\deg P}}1\nonumber\\
	&=\sum_{\substack{\deg P\le n\\ P\in S}}\Psi\of{n-\deg P, \deg P}.
\end{align}

For $\Psi\of{n-\deg P, \deg P}$, we have the following property.

\begin{lemma}\label{inequality_for_Psi}
	Let  $1\le r\le n$ be integers. Then for any $1\le m\le n$, we have the following inequality
	\begin{equation}\label{inequality_for_Psi_eq}
		\sum_{\deg P\le m}\Psi\of{n-r\deg P,\deg P}\le \frac{n}{r}\Psi(n,m).
	\end{equation}
\end{lemma}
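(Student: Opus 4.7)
The plan is to establish the inequality via a counting argument on pairs versus $m$-smooth divisors. The left-hand side $\sum_{\deg P\le m}\Psi(n-r\deg P,\deg P)$ is, by definition of $\Psi$, the cardinality of the set
\[
T := \set{(P, B) : P\in\cP,\ \deg P\le m,\ B\in\cD^+,\ \deg B = n-r\deg P,\ d^+(B)\le \deg P}.
\]
Consider the map $\Phi: T\to \cS(n,m)$ defined by $\Phi(P, B) := rP+B$. First I would verify that $\Phi$ lands in $\cS(n,m)$: the divisor $A = rP+B$ is effective with $\deg A = r\deg P + (n-r\deg P)=n$, and since $d^+(B)\le \deg P\le m$, every prime factor of $A$ has degree at most $m$, so $A\in \cS(n,m)$.

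The main step is to bound the fiber size $\#\Phi^{-1}(A)$ for each $A\in\cS(n,m)$. Given $A = \sum_i a_iP_i$ (with the $P_i$ distinct primes and $a_i\ge 1$), a preimage of $A$ corresponds to the choice of some $P = P_j$ with $a_j\ge r$ (so that $B := A-rP_j$ remains effective), together with the constraint $d^+(B)\le \deg P_j$. I would split into two cases. If $a_j>r$, then $P_j$ still supports $B$, so $d^+(B)\ge \deg P_j$, forcing $d^+(B)=\deg P_j$ and hence $\deg P_i\le \deg P_j$ for every $i\ne j$ with $a_i\ge 1$. If $a_j=r$, then $P_j\nmid B$, but the condition $d^+(B)\le \deg P_j$ still forces $\deg P_i\le \deg P_j$ for all $i\ne j$ with $a_i\ge 1$. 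In both cases we deduce $\deg P_j = d^+(A)$ and $a_j\ge r$.

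Therefore the fiber $\Phi^{-1}(A)$ is in bijection with the set $J(A) := \set{j : \deg P_j = d^+(A),\ a_j\ge r}$. Using that $\sum_i a_i\deg P_i = \deg A = n$, I would bound
\[
r\,d^+(A)\cdot \#J(A) \le \sum_{j\in J(A)} a_j\deg P_j \le n,
\]
so $\#\Phi^{-1}(A) = \#J(A)\le n/(r\,d^+(A))\le n/r$, since $d^+(A)\ge 1$ for any nonzero effective divisor. (The $A=0$ case is irrelevant as we may assume $n\ge 1$; otherwise the inequality is trivial.)

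Combining these yields
\[
\sum_{\deg P\le m}\Psi(n-r\deg P,\deg P) = \#T = \sum_{A\in\cS(n,m)}\#\Phi^{-1}(A)\le \frac{n}{r}\cdot \#\cS(n,m) = \frac{n}{r}\Psi(n,m),
\]
which is the desired inequality. The only subtle point is the case analysis in the fiber computation, ensuring that both $a_j>r$ and $a_j=r$ force $P_j$ to realize the maximal degree $d^+(A)$; this is what guarantees the counting is tight enough to yield the factor $n/r$ rather than something weaker.
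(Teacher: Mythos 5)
Your proof is correct and follows essentially the same counting strategy as the paper: the paper also defines the map $(A,P)\mapsto A+rP$ from $\bigcup_k \cS(n-rk,k)\times\cP_k$ into $\cS(n,m)$, observes that any two preimages of a divisor $D$ must use a prime of degree exactly $d^+(D)$, and bounds the number of prime factors of maximal degree with coefficient at least $r$ by $n/r$. Your writeup is somewhat more explicit (e.g., the $a_j>r$ versus $a_j=r$ case split) where the paper is terse, but the argument is the same.
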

\begin{proof}
	Let $\cP_k$ be the set of all prime divisors of degree $k$. Then we define a map $T$ as follows
	\begin{align*}
		T:\bigcup_{k=0}^m\cS(n-rk, k)\times \cP_k&\to \cS(n,m), \\(A,P)&\mapsto A+rP.
	\end{align*}
	For any $D\in \cS(n,m)$, suppose $T((A_1,P_1))=T((A_2,P_2))=D$ for some two pairs $(A_1,P_1)$  and $(A_2,P_2)$. Then $D=A_1+rP_1=A_2+rP_2$. Since $d^+(A_i)\le\deg P_i$ for $i=1,2$, we get that $d^+(D)=\deg P_1=\deg P_2$. It follows that $D$ has at most $n/r$ distinct prime factors of the largest degree $d^+(D)$ whose coefficients are at least $r$. This means that there are at most $n/r$ pairs of the same image under $T$. Therefore, (\ref{inequality_for_Psi_eq}) holds.
\end{proof}

First, we show an asymptotic estimate on the average of $Q_\cP(A)$.
\begin{lemma} \label{average_of_QSA}
	We have
	\begin{equation}\label{average_of_QSA_eq}
		\sum_{ \deg A=n}Q_\cP(A)=c_Kq^n+O_A\of{q^n\exp\of{-c\sqrt{\frac{n}{\log n}}}}.
	\end{equation}
	Here and thereafter mentioned c is a positive constant that may vary from one line to
	the next.  
\end{lemma}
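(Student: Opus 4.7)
My plan is to start from the identity~\eqref{qsfphi}, which expresses
\[
g(n):=\sum_{\deg A=n}Q_\cP(A)=\sum_{k=1}^{n}\pi_K(k)\,\Psi(n-k,k),
\]
and to split this sum at a threshold $m=m(n)$ of order $\sqrt{n\log n}$, handling the small-$k$ and large-$k$ ranges by completely different methods.

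For the small-$k$ range $k\le m$, I would apply Lemma~\ref{inequality_for_Psi} with $r=1$ to bound $\sum_{k\le m}\pi_K(k)\Psi(n-k,k)\le n\,\Psi(n,m)$, and then invoke Lemma~\ref{smoothbd} to get $\Psi(n,m)\ll q^n\exp(-n/m)$. With $m=\sqrt{n\log n}$, this yields a contribution $\ll n q^n\exp(-\sqrt{n/\log n})\ll q^n\exp(-c\sqrt{n/\log n})$, which is exactly of the size of the claimed error term. The intuition is that $\Psi(n,m)$ is small whenever $m$ is substantially smaller than $n$, so the small-$k$ part carries no main contribution.

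For the large-$k$ range $k>m$, my goal is to extract the main term $c_K q^n$. I would split further at $k=n/2$. When $k>n/2$ we have $\Psi(n-k,k)=b_{n-k}=c_K q^{n-k}+O(1)$ by Corollary~\ref{Cor: bn_for_large_n}, so this piece reduces to $\sum_{k>n/2}\pi_K(k)(c_K q^{n-k}+O(1))$, which can be evaluated directly using the Prime Number Theorem (Lemma~\ref{Lem: prime_num_thm}) and contributes a fraction of the main term. When $m<k\le n/2$ I would use the Dickman-function estimate~\eqref{Psiasymeq} together with the PNT to approximate $\pi_K(k)\Psi(n-k,k)$ by $c_K q^n\rho(u)/k$ with $u=(n-k)/k$, and then convert the resulting Riemann sum into an integral in $u$. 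Combining the two subranges by means of the classical identity $\int_0^\infty \rho(u)/(u+1)\,du=1$ (an analogue of the Alladi--Erd\H{o}s identity in the integer setting, which may also be derived from the defining differential equation $u\rho'(u)=-\rho(u-1)$) collects the total main term as exactly $c_K q^n$.

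The hardest part will be the careful bookkeeping of the various error sources --- the PNT error $O(q^{k/2}/k)$ in $\pi_K(k)$, the Dickman-approximation error $O((1+u\log u)/m)$ in~\eqref{Psiasymeq}, the Riemann-sum discretisation error, and the tail $\int_{n/m-1}^\infty \rho(u)/(u+1)\,du$ --- and verifying that with the balanced choice $m\asymp\sqrt{n\log n}$ they all combine into $O(q^n\exp(-c\sqrt{n/\log n}))$. The super-polynomial decay $\rho(u)\ll \exp(-u\log u)$ for large $u$ is what makes the tail cut-off affordable, while the factor $1/k^2$ arising in the Riemann-sum comparison absorbs the PNT error on the relevant range.
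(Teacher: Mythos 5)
Your decomposition of $\sum_{\deg A=n}Q_\cP(A)=\sum_k\pi_K(k)\Psi(n-k,k)$ and the treatment of the small-$k$ piece via Lemma~\ref{inequality_for_Psi} and Lemma~\ref{smoothbd} is correct and matches one of the error pieces in the paper's argument. However, your route to the main term is genuinely different from the paper's, and as stated it cannot deliver the error bound $O\of{q^n\exp(-c\sqrt{n/\log n})}$ claimed in the lemma.

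The difficulty is the Dickman-approximation error. Equation~\eqref{Psiasymeq} gives
$\Psi(n-k,k)=c_Kq^{n-k}\rho(u)\of{1+O\of{(1+u\log u)/k}}$ with $u=(n-k)/k$, so replacing $\Psi(n-k,k)$ by $c_Kq^{n-k}\rho(u)$ in the range $k>m$ incurs an absolute error
$\ll c_Kq^n\sum_{k>m}\rho(u)(1+u\log u)/k^2\ll q^n/m$, since $\rho(u)(1+u\log u)$ is bounded. With $m\asymp\sqrt{n\log n}$ this is $\ll q^n/\sqrt{n\log n}$, which is only polynomially small and hence swamps the target $q^n\exp(-c\sqrt{n/\log n})$. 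The Riemann-sum discretisation error is similarly only power-saving. So while your Dickman-integral argument (via $\int_0^\infty\rho(u)/(u+1)\,du=1$) would correctly identify the main term $c_Kq^n$ and prove the weaker statement $\sum_{\deg A=n}Q_\cP(A)\sim c_Kq^n$, it does not prove Lemma~\ref{average_of_QSA} as stated.

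The paper avoids this obstruction entirely by a much simpler observation: $Q_\cP(A)\ge 1$ for every effective $A\ne 0$, so one can write
$\sum_{\deg A=n}Q_\cP(A)=\sum_{\deg A=n}1+\sum_{\deg A=n}(Q_\cP(A)-1)$. The first sum is $b_n=c_Kq^n+O(1)$ by Corollary~\ref{Cor: bn_for_large_n}, giving the main term for free with no Dickman analysis at all. The correction $\sum_{\deg A=n}(Q_\cP(A)-1)$ is supported on divisors with at least two distinct top-degree primes and is bounded by $\sum_{\deg P\le n}\Psi(n-2\deg P,\deg P)$; note the factor $2$ in $n-2\deg P$, which is crucial. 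Splitting this at $m=[\sqrt{n\log n}]$, the small-degree part is controlled exactly as in your proposal, and the large-degree part is $\ll\sum_{k\ge m}(q^k/k)\,q^{n-2k}\ll q^{n-m}/m$, which is comfortably within the exponential budget. If you want to salvage your approach, you would have to find some identity that eliminates the Dickman approximation from the main term; but the observation $Q_\cP(A)\ge 1$ is precisely that identity, and it leads directly to the paper's proof.
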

\begin{proof}
	First, we can write the sum as follows
	\begin{align}
		\sum_{ \deg A=n}Q_\cP(A)&=\sum_{ \deg A=n}1+\sum_{ \deg A=n}\of{Q_\cP(A)-1}\nonumber\\
		&=c_Kq^n+\sum_{\substack{\deg A=n\\Q_\cP(A)\ge2}}\of{Q_\cP(A)-1}+O(1).\label{aqfpfeq1}
	\end{align}
	By the similar  argument in  (\ref{qsfphi}), one can see that
	\begin{align}
		\sum_{ \substack{\deg A=n\\Q_\cP(A)\ge2}}\of{Q_\cP(A)-1}&\le\sum_{ \substack{\deg A=n\\Q_\cP(A)\ge2}}Q_\cP(A)=\sum_{\deg P\le n}\Psi\of{n-2\deg P, \deg P}\nonumber\\
		&=\sum_{\deg P<m}\Psi\of{n-2\deg P, \deg P}+\sum_{m\le\deg P\le n}\Psi\of{n-2\deg P, \deg P}\nonumber\\
		&:=S_1+S_2,
	\end{align}
	where $m\gg \sqrt{n\log n}$ is to be chosen later.

	For $S_1$, by Lemma~\ref{inequality_for_Psi}  and Lemma~\ref{smoothbd}, we have
	\begin{equation}
		S_1\ll n\Psi(n,m)\ll nq^{n}\exp\of{-n/m}.
	\end{equation}

	For $S_2$, recall that by the Prime Number Theorem for function fields \eqref{Lem: prime_num_thm_eq} we get $\pi_K(k)\ll q^k/k$. By \eqref{averagecounting}, we have  $\Psi(n-2k,k)\ll q^{n-2k}$. Hence 
	\begin{align}
		\label{aqfpfeq2}
		S_2		&=\sum_{k=m}^{n}\pi_K(k)\Psi(n-2k,k)\nonumber\\
		&\ll \sum_{k=m}^n\frac{q^{k}}{k}\cdot q^{n-2k}\nonumber\\
		&\ll \frac{q^{n-m}}{m}.
	\end{align}
	
	Taking $m=[\sqrt{n\log n}]$ and	combining \eqref{aqfpfeq1} - \eqref{aqfpfeq2}, we deduce the desired asymptotic estimate \eqref{average_of_QSA_eq}.
\end{proof}

Now, we show the following asymptotic estimate for  $Q_S(A)$.

\begin{theorem} \label{equidistribution} 
	For any fixed subset $S\subseteq\cP$ with natural density $\delta(S)$, we have
	\begin{equation}\label{equidistributioneq}
		\sum_{ \deg A=n}Q_S(A)=\delta(S)c_Kq^n+o(q^{n}).
	\end{equation}
\end{theorem}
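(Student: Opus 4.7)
The plan is to exploit Lemma~\ref{average_of_QSA}, which handles the case $S=\cP$, and transfer the natural density of $S$ to the weighted sum. Following the derivation of \eqref{qsfphi} verbatim, one obtains the same factorisation
\begin{equation*}
\sum_{\deg A=n}Q_S(A)=\sum_{k=1}^n\pi_{K,S}(k)\,\Psi(n-k,k).
\end{equation*}
Writing $R(k):=\pi_{K,S}(k)-\delta(S)\pi_K(k)$, the natural density hypothesis gives $R(k)=o(\pi_K(k))$ as $k\to\infty$, while multiplying Lemma~\ref{average_of_QSA} by $\delta(S)$ yields $\delta(S)\sum_k\pi_K(k)\Psi(n-k,k)=\delta(S)c_Kq^n+o(q^n)$. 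It therefore suffices to show that the error
\begin{equation*}
E_n:=\sum_{k=1}^nR(k)\,\Psi(n-k,k)
\end{equation*}
is $o(q^n)$.

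Given $\varepsilon>0$, I would choose $N=N(\varepsilon)$ large enough that $|R(k)|\le\varepsilon\pi_K(k)$ for every $k\ge N$, and split $E_n=E_n^{\text{low}}+E_n^{\text{high}}$ at $N$. For the high range, Lemma~\ref{average_of_QSA} gives immediately
\begin{equation*}
|E_n^{\text{high}}|\le\varepsilon\sum_{k=N}^n\pi_K(k)\,\Psi(n-k,k)\le\varepsilon\bigl(c_Kq^n+o(q^n)\bigr).
\end{equation*}
For the low range, one uses the trivial bound $|R(k)|\le\pi_K(k)$ (which holds since $0\le\pi_{K,S}(k)\le\pi_K(k)$ and $0\le\delta(S)\le 1$) together with Lemma~\ref{inequality_for_Psi} with $r=1$ and $m=N-1$, obtaining
\begin{equation*}
|E_n^{\text{low}}|\le\sum_{\deg P\le N-1}\Psi(n-\deg P,\deg P)\le n\,\Psi(n,N-1).
\end{equation*}
Since $N$ is fixed and there are only finitely many primes of degree at most $N-1$, $\Psi(n,N-1)$ counts non-negative integer solutions of a single linear equation in finitely many variables, and so is polynomial in $n$. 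Thus $|E_n^{\text{low}}|=o(q^n)$. Combining, $|E_n|\le o(q^n)+\varepsilon(c_Kq^n+o(q^n))$; since $\varepsilon>0$ was arbitrary, $E_n=o(q^n)$, which finishes the proof.

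The principal subtlety is the treatment of the low-$k$ range: the bound $|R(k)|\le\varepsilon\pi_K(k)$ is available only for $k\ge N(\varepsilon)$, so for small $k$ one has to rely on the trivial estimate. The argument survives because over $\F_q$ there are only finitely many primes of any bounded degree, which forces $\Psi(n,N-1)$ to be only polynomially large in $n$ and hence negligible against $q^n$. This is the function-field counterpart of the usual initial-segment argument that appears in classical Alladi-type proofs.
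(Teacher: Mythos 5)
Your proof is correct, and it takes a genuinely different and more elementary route than the paper.

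The paper splits the sum at a threshold $m$ that \emph{grows} with $n$ (indeed $m\gg\sqrt{n\log n}$): it uses the Dickman-type bound of Lemma~\ref{smoothbd} a second time to control the contribution from small $\deg P$, and it introduces the auxiliary quantities $e_{K,S}(n)$, $v_{K,S}(n)$ together with the technical selection device of Lemma~\ref{choiceofm} to pick a good sequence $m=k_n$ for the large-$\deg P$ range. You instead split at a \emph{fixed} threshold $N=N(\varepsilon)$, chosen so that $|R(k)|\le\varepsilon\pi_K(k)$ for $k\ge N$. The high range is then controlled directly by the $S=\cP$ case (Lemma~\ref{average_of_QSA}), and the low range is disposed of by the simple observation that $\Psi(n,N-1)$, being the number of solutions to a single linear Diophantine equation in finitely many non-negative integers, grows only polynomially in $n$ and is therefore crushed by $q^n$. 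A few points worth noting: the factorisation $\sum_{\deg A=n}Q_S(A)=\sum_k\pi_{K,S}(k)\Psi(n-k,k)$ is precisely \eqref{qsfphi}; the bound $|R(k)|\le\pi_K(k)$ holds as you state since $0\le\pi_{K,S}(k)\le\pi_K(k)$ and $0\le\delta(S)\le1$; the application of Lemma~\ref{inequality_for_Psi} with $r=1$, $m=N-1$ is legitimate once $n\ge N-1$; and in the high range the non-negativity of the summands $\pi_K(k)\Psi(n-k,k)$ justifies enlarging the range from $k\ge N$ to all $k$, after which Lemma~\ref{average_of_QSA} applies with a constant that is independent of $N$, so the $\varepsilon$-argument closes. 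Your approach avoids the second use of Lemma~\ref{smoothbd} and dispenses entirely with Lemma~\ref{choiceofm}; what the paper's more elaborate bookkeeping buys is an explicit error term in terms of $v_{K,S}$, which would be quantitative if one strengthened the natural-density hypothesis, whereas your argument, being a pure $\varepsilon$-splitting, yields exactly the $o(q^n)$ the theorem asserts and nothing more.
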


\begin{proof}
	First, similar to the proof of Lemma~\ref{average_of_QSA}, we break (\ref{qsfphi}) up into two parts:
	\begin{align}
		\sum_{\deg A=n}Q_S(A)
		&=\sum_{\substack{\deg P<m\\ P\in S}}\Psi\of{n-\deg P, \deg P}+\sum_{\substack{m\le \deg P\le n\\ P\in S}}\Psi\of{n-\deg P, \deg P}\nonumber\\
		&:=S_3+S_4,
	\end{align}	
	where $m\gg \sqrt{n\log n}$ is to be chosen later.
	
	For $S_3$, by Lemmas \ref{smoothbd} and \ref{inequality_for_Psi}  again, we have
	\begin{equation}
		S_3\ll \sum_{\deg P<m}\Psi\of{n-\deg P, \deg P} \ll n\Psi(n,m)\ll nq^n\exp\of{-n/m}.
	\end{equation}

	For $S_4$,  we have
	\begin{align}
		S_5:&=S_4-\delta(S)\sum_{m\le \deg P\le n}\Psi\of{n-\deg P, \deg P}\nonumber\\
		&=\sum_{\substack{m\le \deg P\le n\\ P\in S}}\sum_{\substack{\deg B=n-\deg P\\ d^+(B)\le\deg P}}1-\delta(S)\sum_{m\le \deg P\le n}\sum_{\substack{\deg B=n-\deg P\\ d^+(B)\le\deg P}}1\nonumber\\
		&=\sum_{\substack{\deg B\le n-m\\d^+(B)\le n-\deg B}} \sum_{\substack{\deg P=n-\deg B\\P\in S}} 1-\delta(S)\sum_{\substack{\deg B\le n-m\\d^+(B)\le n-\deg B}}  \sum_{\deg P=n-\deg B}1\nonumber\\
		&=\sum_{\substack{\deg B\le n-m\\d^+(B)\le n-\deg B}} \Big( \pi_{K,S}(n-\deg B)-\delta(S)\pi_K(n-\deg B)\Big)\nonumber\\
		&=\sum_{k=m}^n( \pi_{K,S}(k)-\delta(S)\pi_K(k))\sum_{\substack{\deg B=n-k\\d^+(B)\le k}}1.
	\end{align}
	
	To analyze the growth rate of the error term, we define
	\begin{align*}
		e_{K,S}(n)&:=\sup_{k\le n}|\pi_{K,S}(k)-\delta(S)\pi_K(k)|,\\
		v_{K,S}(n)&:=\sup_{k\ge n}	\frac{e_{K,S}(k)}{k\pi_K(k)}, \quad \forall n\ge1.
	\end{align*}
	Then $e_{K,S}(n)$ is nondecreasing,  while $v_{K,S}(n)$ is nonincreasing, as $n\to\infty$. By the inequalities $\pi_K(k)\ll q^k/k$ and $\sum_{\deg B=n-k}1\ll q^{n-k}$, we have
	\begin{align}
		|S_5|&\le \sum_{k=m}^n| \pi_{K,S}(k)-\delta(S)\pi_K(k)|\sum_{\deg B=n-k}1\nonumber\\
		&\ll q^n\sum_{k=m}^n\frac{e_{K,S}(k)}{q^k}\nonumber\\
		&\ll q^n\sum_{k=m}^n v_{K,S}(k)	\nonumber\\
		&\ll nq^nv_{K,S}(m).
	\end{align}
	
	It follows that
	\begin{equation}\label{pfSeq}
		\sum_{\deg A=n}Q_S(A)=\delta(S)\sum_{m\le \deg P\le n}\Psi\of{n-\deg P, \deg P}+R_{K,S}(n,m),	
	\end{equation}
	where
	$$R_{K,S}(n,m)\ll  nq^nv_{K,S}(m)+nq^n\exp\of{-n/m}. $$
	
	Take $S=\cP$, we have
	\begin{equation}\label{pfPeq}
		\sum_{\deg A=n}Q_\cP(A)=\sum_{m\le \deg P\le n}\Psi\of{n-\deg P, \deg P}+O\of{nq^n\exp\of{-n/m}}.
	\end{equation}
	
	Combining (\ref{pfSeq}), (\ref{pfPeq}) and  (\ref{average_of_QSA_eq}) together, we get that
	\begin{equation}
		\sum_{\deg A=n}Q_S(A)=\delta(S)c_Kq^n+E_{K,S}(n,m),
	\end{equation}
	where 
	\begin{equation}\label{pfeboundeq}
		E_{K,S}(n,m) \ll  nq^n v_{K,S}(m)+nq^n\exp\of{-n/m}+q^n\exp\of{-c\sqrt{\frac{n}{\log n}}}.
	\end{equation}
	
	For the choice of admissible $m$, we will use some properties on the growth rate of the error terms $e_{K,S}(n)$ and $v_{K,S}(n)$, see Lemma~\ref{choiceofm} below.
	By Lemma~\ref{choiceofm}(a), we have  $\lim_{n\to\infty}{e_{K,S}(n)}/{\pi_K(n)}=0$.
	This implies that $\lim_{n\to\infty}nv_{K,S}(n)=0$ due to $nv_{K,S}(n)\le \sup_{k\ge n}e_{K,S}(k)/\pi_K(k)$. Thus, by Lemma~\ref{choiceofm}(b), for $h(n)=\sqrt{n/\log n}$, there exists a sequence $\set{k_n}_{n=1}^\infty$ such that $\lim_{n\to\infty}k_n=\lim_{n\to\infty}n/k_n=\infty$, $n/k_n\ll\sqrt{n/\log n}$, and $\lim_{n\to\infty}nv_{K,S}(k_n)=0$.
	Taking $m=k_n$ in (\ref{pfeboundeq}) gives us that $E_{K,S}(n,k_n)=o\of{q^n}$, as desired.
\end{proof}

\begin{lemma}\label{choiceofm}
	(a) Suppose $f,g: \N\to(1,\infty)$ are two arithmetic functions such that
	$$\lim_{n\to\infty}g(n)=\infty \text{ and } \lim_{n\to\infty}\frac{f(n)}{g(n)}=\delta.$$
	Suppose $g(n)$ is monotonically increasing. 
	Define
	$$e(n;f,g):=\sup_{k\le n}|f(k)-\delta g(k)|.$$
	Then 
	\begin{equation}
		\lim_{n\to\infty}\frac{e(n;f,g)}{g(n)}=0.
	\end{equation}
	
	(b)  Given a  decreasing function $f: \N\to(0,\infty)$  with
	$\lim_{n\to\infty}nf(n)=0$. Suppose $h: \N\to[1,\infty)$ is an arbitrary function such that $\lim_{n\to\infty}h(n)=\infty$ and $\lim_{n\to\infty}n/h(n)=\infty$ .
	Then  there exists a  sequence $\set{k_n}_{n=1}^\infty$ of positive integers satisfying $\lim_{n\to\infty}k_n=\infty$ such that 
	\begin{enumerate}
		\item $\lim_{n\to\infty}n/k_n=\infty$.
		\item $n/k_n\ll h(n)$. 
		\item $\lim_{n\to\infty}n\cdot f(k_n)=0$.
	\end{enumerate}
\end{lemma}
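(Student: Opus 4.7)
For part (a), the plan is a standard $\varepsilon/2$ argument exploiting the monotonicity of $g$. Given $\varepsilon > 0$, I would choose $N$ such that $|f(k)-\delta g(k)|<(\varepsilon/2)\,g(k)$ for all $k\ge N$; since $g$ is nondecreasing, the tail of the supremum defining $e(n;f,g)$ is then bounded by $(\varepsilon/2)\,g(n)$, while the finitely many terms with $k<N$ contribute a fixed constant that becomes negligible once divided by $g(n)\to\infty$. Nothing subtle happens here.

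Part (b) is the interesting one. The sequence $k_n$ is constrained to lie (roughly) in the window $[n/h(n),\,o(n)]$, and the task is to locate $k_n$ so that the tail behavior of $a_m:=mf(m)\to 0$ forces $nf(k_n)=(n/k_n)\,a_{k_n}\to 0$. My plan is to introduce the nonincreasing envelope
\[
b(n):=\sup_{m\ge n} a_m,
\]
which is strictly positive (because $f>0$) and tends to zero. Setting $m_n:=\lceil n/h(n)\rceil$, I will define
\[
L_n:=\min\!\left(\frac{1}{\sqrt{b(m_n)}},\,h(n)\right),\qquad k_n:=\left\lceil \frac{n}{L_n}\right\rceil.
\]
The verifications will proceed in order: $L_n\to\infty$ because $m_n\to\infty$ forces $b(m_n)\to 0$ and $h(n)\to\infty$ by hypothesis; $L_n\le h(n)$ by construction, which gives $k_n\ge m_n\to\infty$ and $n/k_n\le L_n\le h(n)$ (so $k_n\to\infty$ and condition~(2) both hold); and condition~(1) follows from $L_n=o(n)$, itself a consequence of $L_n\le h(n)$ together with $n/h(n)\to\infty$.

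The one step requiring care is condition~(3). Using $k_n\ge m_n$ and monotonicity of $b$,
\[
nf(k_n)=\frac{n}{k_n}\cdot k_n f(k_n)\le L_n\cdot b(k_n)\le L_n\cdot b(m_n)\le \sqrt{b(m_n)}\longrightarrow 0,
\]
where the final inequality is precisely the built-in bound $L_n\le 1/\sqrt{b(m_n)}$. The main obstacle — and the reason for the square root in the definition of $L_n$ — is a balancing act: $L_n$ must tend to infinity, yet the product $L_n\cdot b(m_n)$ must still vanish. Taking $L_n$ comparable to $1/\sqrt{b(m_n)}$ splits the decay of $b(m_n)$ evenly between the two factors so that both conclusions hold simultaneously. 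Once this balance is struck, the remaining verifications are routine.
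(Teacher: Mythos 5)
Both parts are correct, and your implementation differs from the paper's in its technical details.

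For part~(a), your standard $\varepsilon/2$ decomposition is the more elementary route. The paper instead introduces a moving threshold $h_n := \sup\{m : g(m)\le g(n)^{1/2}\}$, bounds $e(n;f,g)/g(n)$ by $g(h_n)/g(n)+\sup_{h_n\le k\le n}|f(k)/g(k)-\delta|$, and sends both terms to zero. Same conclusion, and your version avoids the auxiliary function.

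For part~(b), both you and the paper use a $\min$ of two competing quantities to enforce growth that is simultaneously unbounded and slow, but the auxiliary objects differ. The paper encodes the decay of $f$ into a function $N(m)$ (the least threshold past which $nf([n/m])<1/m$), shows $\sup\{m: N(m)<n\}\to\infty$, sets $h_n:=\min\bigl([\sqrt{h(n)}],\,\sup\{m:N(m)<n\}\bigr)$ and $k_n:=[n/h_n]$, and then $nf(k_n)<1/h_n\to 0$ comes for free because the rate is pre-compiled into $N(m)$; note the paper's square root appears on the $h(n)$-side rather than the decay side. You instead package the decay into the monotone envelope $b(n)=\sup_{m\ge n}mf(m)$ and put the square root on $L_n:=\min\bigl(1/\sqrt{b(m_n)},\,h(n)\bigr)$, so the balance $L_n\to\infty$ together with $L_n\cdot b(m_n)\le\sqrt{b(m_n)}\to 0$ is made explicit. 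Your version is arguably more transparent since the balancing act is visible rather than hidden inside the definition of $N(m)$; you do need to note (as you did) that $b>0$ everywhere so that $1/\sqrt{b(m_n)}$ is finite. One small point worth making explicit: condition~(1) follows not directly from $L_n\le h(n)=o(n)$ but from $n/k_n=n/\lceil n/L_n\rceil\ge L_n/(1+L_n/n)\to\infty$, using $L_n\to\infty$ and $L_n/n\to 0$; your sketch compresses this, but the estimate holds.
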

\begin{proof} (a) For any $n$, define 
	$$h_n=\sup\set{m\in\N: g(m)\le g(n)^{1/2}\text{ or } m=1}.$$
	Then $h_n$ is nondecreasing, $h_n<n$, $\lim_{n\to\infty}h_n=\infty$ (use contradiction method), and $\lim_{n\to\infty}g(h_n)/g(n)=0$. Since $\lim_{n\to\infty}f(n)/g(n)=\delta$, we have $f(n)\ll g(n)$. It follows that
	\begin{equation}
		\frac{e(n;f,g)}{g(n)}\ll\frac{g(h_n)}{g(n)}+\sup_{h_n\le k\le n}\frac{|f(k)-\delta g(k)|}{g(n)}.
	\end{equation}
	Since $\frac{1}{g(n)}\le\frac1{g(k)}$ for all $k\le n$, we get that
	\begin{equation}
		\frac{e(n;f,g)}{g(n)}\ll\frac{g(h_n)}{g(n)}+\sup_{h_n\le k\le n}\frac{|f(k)-\delta g(k)|}{g(k)}=\frac{g(h_n)}{g(n)}+\sup_{h_n\le k\le n}\abs{\frac{f(k)}{g(k)}-\delta}.
	\end{equation}
	Thus, 
	$$\lim_{n\to\infty}\frac{e(n;f,g)}{g(n)}=\lim_{n\to\infty}\frac{g(h_n)}{g(n)}+\lim_{n\to\infty}\sup_{h_n\le k\le n}\abs{\frac{f(k)}{g(k)}-\delta}=0.$$
	
	(b) Since $\lim_{n\to\infty}nf(n)=0$, for any $m\ge1$, there exists a minimum positive integer constant $N=N(m)$ such that
	\begin{equation}\label{PPTpflimitdfn}
		n f\left(\left[\frac{n}{m}\right]\right)<\frac1m
	\end{equation}
	for all $n>N(m)$. Then $N(m)$ increases as $m$ increases.  And we have that $\lim_{n\to\infty}\sup\set{m\in\N: N(m)<n}=\infty$. Otherwise, suppose the limit is $M<\infty$. Then there exists some $L$ such that $\sup\set{m\in\N: N(m)<n}=M$ for all $n\ge L$. Then for any $n\ge L$, $M+1\notin \set{m\in\N: N(m)<n}$, which means $N(M+1)\ge n$,  contradiction.
	
	Now, we set
	\begin{equation}
		h_n=\min\left(\left[\sqrt{h(n)}\right], \sup\set{m\in\N: N(m)<n \text{ or }m=1}\right).
	\end{equation}
	Here $[x]:=\sup\set{n\in\N: n\le x}$ is the integral part of $x$.
	Then $h_n\le \sqrt{h(n)}$ and $\lim_{n\to\infty}h_n=\infty$. Moreover, we have $n>N(h_n)$ for all $n> N(1)$. By (\ref{PPTpflimitdfn}),  we have
	$$nf\left(\left[\frac{n}{h_n}\right]\right)<\frac1{h_n}$$
	all $n> N(1)$, which implies that $\lim_{n\to\infty}nf\left(\left[{n}/{h_n}\right]\right)=0$. 
	
	Finally, we take $k_n=[{n}/{h_n}]$. Then $k_n$ satisfies the required properties.
\end{proof}

\section{Partial sums of the M\"obius function}\label{Sec:intermediate_thm}

In this section, we mainly show the analogues of Theorem 5 and Theorem 6 in \cite{Alladi1977} for function fields. The first analogue is an estimate on a partial sum of  the M\"obius function. The second one is an  intermediate result that can be used to convert the estimate of the average of $Q_S(A)$ in (\ref{equidistributioneq}) into the desired formula (\ref{mainthmeq}). We firstly  show the analogue of \cite[Theorem 5]{Alladi1977}.

\begin{lemma}\label{averagemu}
	For any bounded function $f$,	we have
	\begin{equation}\label{averagemueq}
		\sum_{\deg A\le n}\mu(A)1_{\mathfrak{D}(K,S)}(A)f(d_{-}(A))=O_{f,K}\of{\frac{q^n}{\log\log n}}.
	\end{equation}
	
\end{lemma}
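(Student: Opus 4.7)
The plan is to adapt Alladi's proof of Theorem~5 in \cite{Alladi1977} to the function field setting. Denote the sum on the left of \eqref{averagemueq} by $T(n)$ and set $C:=\sup|f|$. The first step is a decomposition by the smallest prime factor: since $\mu(A)=0$ unless every multiplicity in $A$ equals $1$, every $A\in\mathfrak{D}(K,S)$ contributing to $T(n)$ can be written uniquely as $A=P+B$, where $P=P_{\min}(A)\in S$ has degree $k:=d_{-}(A)$ and $B\in\cD^+$ is squarefree with either $B=0$ or $d_{-}(B)>k$. Since $\mu(A)=\mu(P)\mu(B)=-\mu(B)$, this yields
\begin{equation*}
T(n) \;=\; -\sum_{\substack{P\in S\\ 1\le\deg P\le n}} f(\deg P)\cdot U_{\deg P}(n-\deg P),
\qquad U_k(N):=\sum_{\substack{B\in\cD^+,\ \deg B\le N\\ d_{-}(B)>k}}\mu(B),
\end{equation*}
where the case $B=0$ contributes $\mu(0)=1$ to $U_k(N)$.

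Next, introduce a parameter $K=K(n)\to\infty$ (to be optimized at the end) and split $T(n)=T_{\le K}(n)+T_{>K}(n)$ according to $\deg P\le K$ or $\deg P>K$. The large-prime range admits a trivial bound: using $|U_k(N)|\le 1+b_N\ll q^N$ from Corollary~\ref{Cor: bn_for_large_n} and $\pi_K(k)\ll q^k/k$ from Lemma~\ref{Lem: prime_num_thm},
\begin{equation*}
|T_{>K}(n)|\;\ll\; C\sum_{K<k\le n}\frac{q^k}{k}\,q^{n-k}\;\ll\;\frac{Cq^n\log(n/K)}{K}.
\end{equation*}
For the small-prime range, the key ingredient is a Mertens-type bound of the form $|U_k(N)|\ll q^N/(N-k+1)$, uniformly for $0\le k\le N-1$. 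Granting this estimate,
\begin{equation*}
|T_{\le K}(n)|\;\ll\; C\sum_{k=1}^{K}\frac{q^k}{k}\cdot\frac{q^{n-k}}{n-k+1}\;\ll\;\frac{Cq^n\log K}{n-K}.
\end{equation*}
Taking $K=\lfloor\log n\cdot\log\log n\rfloor$ balances the two contributions and yields $|T(n)|\ll Cq^n/\log\log n$, as required.

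The main technical obstacle is the Mertens-type bound on $U_k(N)$ in the small-prime step. The analogous integer-case estimate rests on the Prime Number Theorem combined with Mertens'/Selberg-style manipulations; in the function field setting Lemma~\ref{Lem: prime_num_thm} provides an even stronger power-saving error term, and it induces the Mertens-type identity $\sum_{\deg Q\le k}1/|Q|=\log k+O_K(1)$. A Buchstab-type inclusion--exclusion on the primes of degree $\le k$, combined with a strong bound on the full Möbius partial sum $\sum_{\deg B\le N}\mu(B)$ (a consequence of the Weil-type Riemann Hypothesis and the Riemann--Roch theorem \ref{Thm: RR}), should suffice to derive the needed uniformity in $k$, though careful bookkeeping is required to track the genus contribution and the finitely many exceptional divisor classes.
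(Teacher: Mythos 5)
Your overall decomposition---splitting by $\deg P_{\min}(A)$ and separating small and large prime ranges---is the same skeleton the paper uses, but both halves of your estimate have problems.

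\emph{Large-prime range.} The trivial bound does not work, and there is also a computational slip. From $\pi_K(k)\ll q^k/k$ and $|U_k(N)|\ll q^N$ one gets
\[
|T_{>K}(n)|\;\ll\; C\sum_{K<k\le n}\frac{q^k}{k}\,q^{n-k}\;=\;Cq^n\!\!\sum_{K<k\le n}\frac1k\;\ll\;Cq^n\log\!\big(n/K\big),
\]
not $Cq^n\log(n/K)/K$; you have introduced a spurious $1/K$. With the correct estimate the bound is $\gg q^n$ for any admissible $K=o(n)$ (in particular for $K\asymp\log n\log\log n$, where $\log(n/K)\asymp\log n$), so the trivial bound on $U_k$ can never yield a saving here. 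The paper handles this range quite differently: it observes that the sum over $\deg P>m$ is at most $\Phi(n,m)=\#\{A:\deg A\le n,\ d_-(A)>m\}$, and then bounds $\Phi(n,m)$ by the sieve of Eratosthenes, $\Phi(n,m)=c_Kq^n\prod_{\deg P\le m}(1-|P|^{-1})+O(2^{c_Kq^m})\ll q^n/m + 2^{c_Kq^m}$. It is precisely the Mertens-type product $\prod_{\deg P\le m}(1-|P|^{-1})\ll 1/m$ that produces the $1/\log\log n$ factor from the large-prime range; the M\"obius sign is discarded there entirely.

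\emph{Small-prime range.} You need the uniform estimate $|U_k(N)|\ll q^N/(N-k+1)$ to close the small-prime contribution, but you do not prove it; the closing paragraph is an outline of obstacles rather than an argument, and the uniformity in $k$ you invoke is exactly the hard part. The paper instead cites a specific bound (following Fujisawa--Minamide) for $M(N,k)=\sum_{\deg B\le N,\ d_-(B)>k}\mu(B)$ of the form $M(N,k)\ll q^{N}\exp(-c\sqrt{N})\prod_{\deg P\le k}(1-|P|^{-1/2})^{-1}$, then simplifies the product to $\exp(O(q^k))$, which is harmless when $k\le m=[\log\log n]$. Your Mertens-type bound, even if provable, is much weaker than this in the range where $k$ is small, and you would still need to establish it, not assert it.

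So the proposal has two genuine gaps: an arithmetic error that hides the failure of the trivial large-prime bound, and an unproved key lemma ($U_k(N)\ll q^N/(N-k+1)$). To repair it you would essentially be forced back to the paper's route: bound the large range by the sieve count $\Phi(n,m)$ and the small range by a quantitative restricted M\"obius-sum estimate of Fujisawa--Minamide type.
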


\begin{proof} Like \cite[Lemma 4.2]{KuralMcDonaldSah2020}, we break up the sum based on the degree of the minimal prime factor $P_{\min}(A)$ of $A$ for  $A\in \mathfrak{D}(K,S)$. The notation $P_{\min}(A)$ is used only when $A$ is distinguishable.  Then
	\begin{align}\label{pfmusetup}
		\sum_{\deg A\le n}\mu(A)1_{\mathfrak{D}(K,S)}(A)f(d_{-}(A))&=\sum_{\substack{\deg P\le n\\ P\in S}}f(\deg P)\sum_{\substack{\deg A\le n\\ P_{\min}(A)=P}}\mu(A)\nonumber\\
		&=\sum_{\substack{\deg P\le m\\ P\in S}}f(\deg P)\sum_{\substack{\deg A\le n\\
				P_{\min}(A)=P}}\mu(A)\nonumber\\
		&\qquad+ \sum_{\substack{m<\deg P\le n\\ P\in S}}f(\deg P)\sum_{\substack{\deg A\le n\\
				P_{\min}(A)=P}}\mu(A)\nonumber\\
		&:=S_6+S_7.
	\end{align}
	where $m$ is to be chosen later. 
	
	For the inside sum, observe that $\sum_{\substack{\deg A\le n\\P_{\min}(A)=P}}\mu(A)=-\sum_{\substack{\deg A\le n-\deg P\\d_{-}(A)>\deg P}}\mu(A)$. 
	We denote by 
	$$M(n,m):=\sum_{\substack{\deg A\le n\\d_{-}(A)>m}}\mu(A), \quad \Phi(n,m):=\sum_{\substack{\deg A\le n\\d_{-}(A)>m}}1$$
	for $m,n\ge1$. For $M(n,m)$, using the argument in \cite[Theorem 1.2]{FujisawaMinamide2018} as \cite[Lemma 4.2]{KuralMcDonaldSah2020}, one can derive that 
	\begin{equation}\label{uniformbound}
		M(n,m)\ll q^{n}\exp\of{-c\sqrt{n}}\prod_{\deg P\le m}\of{1-{|P|^{-1/2}}}^{-1}
	\end{equation}
	uniformly for $n,m\ge1$, where $c>0$ is a positive constant. For the product over $\deg P\le m$, notice that 
	$$-\sum_{\deg P\le m}\log (1-|P|^{-1/2})\ll \sum_{\deg P\le m}|P|^{-1/2}\ll\sum_{k\le m} q^{k/2}/k
	\ll q^{m/2}/m\ll q^m.$$
	It follows that
	\begin{equation}\label{uniformbd}
		M(n,m)\ll q^{n}\exp\of{-c\sqrt{n}+q^{m}}.
	\end{equation}
	
	Thus, by \eqref{uniformbd}, we have
	\begin{align}\label{s6estimate}
		S_6&=-\sum_{\substack{\deg P\le m\\ P\in S}}f(\deg P)M(n-\deg P, \deg P)\nonumber\\
		&\ll \sum_{\deg P\le m}|M(n-\deg P, \deg P)|\nonumber\\
		&\ll\sum_{\deg P\le m} q^{n-\deg P}\exp\of{-c\sqrt{n-\deg P}+q^{\deg P}}\nonumber\\
		&\ll q^{n}\sum_{1\le k\le m}\frac{q^k}{k}\cdot q^{- k}\exp\of{-c\sqrt{n-k}+q^{k}}\nonumber\\
		&\ll q^n\exp\of{-c\sqrt{n-m}+q^m}/m.
	\end{align}	
	
	For $S_7$, we have
	\begin{equation}\label{s7-1}
		S_7\ll \sum_{m<\deg P\le n}\sum_{\substack{\deg A\le n\\
				P_{\min}(A)=P}}1\le \Phi(n,m).
	\end{equation}
	By the sieve of Eratosthenes (e.g., see \cite[\S5.1]{CojocaruMurty2006}), we have
	\begin{equation}
		\Phi(n,m)=c_Kq^n\prod_{\deg P\le m}\of{1-|P|^{-1}}+O\of{n2^{c_Kq^m}}.
	\end{equation}
	Then by the Prime Number Theorem for function fields, we get that
	\begin{equation}\label{s7-2}
		\Phi(n,m)\ll\frac{q^n}{m}+n2^{c_Kq^m}.
	\end{equation}
	
	Taking  $m=[\log\log n]$ and combining \eqref{pfmusetup}, \eqref{s6estimate}, \eqref{s7-1}, and \eqref{s7-2} together, we can get the desired estimate (\ref{averagemueq}).
\end{proof}

\begin{remark}
	As \cite[Theorem 1.2]{FujisawaMinamide2018}, to get the uniform bound \eqref{uniformbound}, we only need to use a new Perron's inversion formula due to Liu and Ye \cite{LiuYe2007} and the fact that the zeta function  $\zeta_K(s)$ of $K$ has a zero-free region like the Riemann zeta function $\zeta(s)$. But it is well-known that the Riemann Hypothesis holds for $\zeta_K(s)$, which says all the zeros of $\zeta_K(s)$ lie on the line $\Re{s}=1/2$. Therefore, one may expect a better bound for $M(n,m)$ by other methods. 
\end{remark}

Now, analogous to \cite[Theorem 6]{Alladi1977}, we  show
the following  duality between the $d_{-}(A)$ and $d^+(A)$. 

\begin{theorem}\label{keylemma}
	For any bounded arithmetic function $f$ with $f(0)=0$, we have that 
	\begin{equation}
		\sum_{ \deg A=n}Q_S(A)f(d^+(A))\sim\delta(S)c_Kq^n
	\end{equation}	
	if and only if
	\begin{equation}
		-\lim_{n\to\infty}  \sum_{\substack{1\le \deg A\le n\\ A\in \mathfrak{D}(K,S)}}\frac{\mu(A)f(d_{-}(A))}{|A|}=\delta(S).
	\end{equation}
\end{theorem}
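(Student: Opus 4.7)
The plan is to establish the asymptotic identity
\begin{equation*}
\sum_{\deg A = n} Q_S(A) f(d^+(A)) \;=\; -\,c_K\, q^n \!\!\sum_{\substack{1 \le \deg B \le n \\ B \in \mathfrak{D}(K,S)}} \frac{\mu(B)\, f(d_{-}(B))}{|B|} \;+\; O\!\left(\frac{q^n}{\log \log n}\right),
\end{equation*}
from which the ``if and only if'' follows immediately: dividing by $c_K q^n$ and letting $n\to\infty$, the left-hand side being $\delta(S) c_K q^n + o(q^n)$ is equivalent to the partial sum on the right converging to $-\delta(S)$.

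To obtain the identity, I would first apply the Duality Lemma (Lemma~\ref{duality}) termwise with the given $f$ to each effective $A$ of degree $n$, then swap the order of summation. For fixed $B\ge 0$, the set $\{A \in \cD^+ : \deg A = n,\, A \ge B\}$ is in bijection (via $A \mapsto A - B$) with effective divisors of degree $n - \deg B$, so its cardinality equals $b_{n-\deg B}$. One arrives at
\begin{equation*}
\sum_{\deg A = n} Q_S(A) f(d^+(A)) \;=\; -\!\sum_{\substack{B \in \mathfrak{D}(K,S) \\ 1 \le \deg B \le n}} \mu(B)\, f(d_{-}(B))\, b_{n-\deg B}.
\end{equation*}
By Corollary~\ref{Cor: bn_for_large_n}, together with the trivial estimate $b_m = O_K(1)$ for $m \le 2g_K - 2$, I can write $b_m = c_K q^m + r_m$ where $|r_m|$ is bounded uniformly in $m$ by a constant depending only on $K$, and in fact $r_m$ equals the same constant $-h_K/(q-1)$ for every $m > 2g_K - 2$. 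The main term produces exactly $-c_K q^n M(n)$, with $M(n)$ the partial sum appearing in the target identity, and it remains to bound
\begin{equation*}
E(n) \;:=\; -\!\sum_{\substack{B \in \mathfrak{D}(K,S) \\ 1 \le \deg B \le n}} \mu(B)\, f(d_{-}(B))\, r_{n-\deg B}.
\end{equation*}

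The estimate on $E(n)$ is the heart of the argument. Set $V(k) := \sum_{\deg B \le k,\, B \in \mathfrak{D}(K,S)} \mu(B) f(d_{-}(B))$, so that Lemma~\ref{averagemu} gives $V(k) = O(q^k / \log\log k)$. Since $r_m$ takes one common value on all but the last $O_K(1)$ indices, I would split $E(n)$ into a ``constant-weight'' piece equal to a scalar multiple of $V(n - 2g_K + 1)$, controlled directly by Lemma~\ref{averagemu}, plus a residual piece supported on the top $O_K(1)$ degrees of $B$, handled by Abel summation using the same bound on $V(k)$ for $k$ near $n$. Both pieces contribute $O(q^n/\log\log n) = o(q^n)$, yielding the claimed identity. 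The main obstacle is just the bookkeeping in this splitting; notably, the clean uniform identity $b_m = c_K q^m + O_K(1)$ from Corollary~\ref{Cor: bn_for_large_n} has no exact analogue for counting ideals of bounded norm in a number field, and this uniformity is precisely what lets us bypass the appeal to Axer's theorem made in \cite{Alladi1977, KuralMcDonaldSah2020}.
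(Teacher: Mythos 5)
Your proof is correct and follows essentially the same route as the paper's: apply the Duality Lemma termwise, swap the summation order, invoke Corollary~\ref{Cor: bn_for_large_n} to write $b_{n-\deg B}$ as a main term plus a uniformly bounded remainder (constant once $n-\deg B>2g_K-2$), and then control both the constant-weight piece and the bounded residual piece on the top $O_K(1)$ degrees via Lemma~\ref{averagemu}. The only cosmetic difference is that you keep the main term as $-c_K q^n$ times the full partial sum up to degree $n$, whereas the paper isolates the partial sum up to degree $n-2g_K+1$ and absorbs the top few degrees into an error bounded the same way; both organizations give the identical conclusion.
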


\begin{proof}
	Let $g=g_K$ be the genus of $K$. Notice that for $n>2g-2$, there is a constant $c_0$ such that  
	$$
	\sum_{\deg A=n}1=c_Kq^n+c_0.
	$$	
	By Lemma~\ref{duality}, we	have
	\begin{align}
		-\delta(S)c_Kq^n&\sim -\sum_{\deg A=n}Q_S(A)f(d^+(A))\nonumber\\
		&=\sum_{ \deg A=n}\sum_{A\ge B}\mu(B)1_{\mathfrak{D}(K,S)}(B)f(d_{-}(B))\nonumber\\
		&=\sum_{\deg B\le n} \mu(B)1_{\mathfrak{D}(K,S)}(B)f(d_{-}(B)) \sum_{\deg C=n-\deg B}1\nonumber\\
		&=\sum_{\deg B<n-2g+2} \mu(B)1_{\mathfrak{D}(K,S)}(B)f(d_{-}(B)) \left(c_K\frac{q^n}{|B|}+c_0\right)+\nonumber\\
		&\qquad\sum_{n-2g+2\le \deg B\le n} \mu(B)1_{\mathfrak{D}(K,S)}(B)f(d_{-}(B)) \sum_{\deg C=n-\deg B}1\nonumber\\
		&=c_Kq^n \sum_{\deg B<n-2g+2} \frac{\mu(B)1_{\mathfrak{D}(K,S)}(B)f(d_{-}(B))}{|B|}\nonumber\\
		&\qquad+c_0\sum_{\deg B<n-2g+2} \mu(B)1_{\mathfrak{D}(K,S)}(B)f(d_{-}(B))\nonumber\\
		&\qquad +O\of{\sum_{k=n-2g+2}^n\abs{\sum_{\deg B=k}\mu(B)1_{\mathfrak{D}(K,S)}(B)f(d_{-}(B))}}\nonumber\\
		&:=S_8+S_9+S_{10}.
	\end{align}
	By Lemma~\ref{averagemu}, we have $S_9=o\of{q^n}$ and $S_{10}=o\of{q^n}$, and hence Theorem~\ref{keylemma}. 
\end{proof}

\begin{remark}
	For  the rational function field $K=\F_q(x)$,  by Lemma~\ref{duality}, we have the following identity
	\begin{equation}\label{finitefielddual}
		-\sum_{\substack{1\le\deg F\le n\\ F\in \mathfrak{D}(q,S)}} \frac{\mu(F)f(d_{-}(F))}{|F|}=\frac1{q^n}\sum_{\deg F= n}Q_S(F)f(d^+(F))
	\end{equation}
	for any function $f$. 
	In this case, Theorem~\ref{keylemma} follows immediately by (\ref{finitefielddual}) without using Lemma~\ref{averagemu}.
\end{remark}

\noindent\textit{Proof of Theorem~\ref{mainthm}}.
Theorem~\ref{mainthm} follows immediately by combining Theorem~\ref{keylemma} taking $f(n)=1$ for $n\ge1$ and Theorem~\ref{equidistribution}.

\section{Some connections with other areas in number theory}\label{Sect: modular_explanation}
\subsection{Connection with \texorpdfstring{$a_{\fp}$}{}'s of elliptic curves over number field}
In this section, we display a connection of Theorem~\ref{mainthm} with the arithmetical property of elliptic curves over number fields. Recall that an \emph{elliptic curve} $E$ over a number field $\mathcal{K}$ is a smooth genus one curve with at least one point defined over $\mathcal{K}$. Every such curve can be represented by a Weierstrass equation  
\begin{equation}\label{Eqn: Weierstass_form}
	E: Y^2=X^3+aX+b
\end{equation}
with some elements $a$ and $b$ in the ring of algebraic integer ring $\cO_{\cK}$ of $\mathcal{K}$. For the basic geometric and arithmetical properties of elliptic curves over number fields, one is suggested to Silverman's book \cite{Silverman1}. Given an elliptic curve $E$, or equivalently, given a Weierstrass form \eqref{Eqn: Weierstass_form}, we can find its discriminant ideal $\Delta_E$, which is a factor of the principal ideal $-16(4a^3+27b^2)$. For every prime ideal $\fp$ of $\cO_{\cK}$, as long as $\fp\nmid \Delta_{E}$, the corresponding residue curve $\tilde{E}_{\fp}$ is still a smooth genus one curve with at least one rational point defined over the residue field $\F_{\fp}$ of $\fp$. Hence in this case, $\tilde{E}_{\fp}$ is an elliptic curve over the finite field $\F_{\fp}$ \cite[Chapter~\RNum{5}]{Silverman1}. In particular, in this case, we have the \emph{zeta function} of $\tilde{E}_{\fp}$ defined by 
\begin{equation}\label{Eqn: zeta_fun}
	Z(\tilde{E}_{\fp}; T):=\exp\left(\sum_{m=1}^{\infty}\#\tilde{E}_{\fp}(\F_{\fp^m})\frac{T^m}{m}\right),
\end{equation}
where the notation $\F_{\fp^m}$ stands for the unique degree $m$ extension of $\F_{\fp}$. If we assume that $q=\#\F_{\fp}$ is the cardinality of $\F_{\fp}$, then due to the Weil conjecture, \eqref{Eqn: zeta_fun} is a rational function of form 
\begin{equation}\label{Eqn: zeta_fun_rational}
	Z(\tilde{E}_{\fp}; T)=\frac{1-a_{\fp}T+qT^2}{(1-T)(1-qT)},
\end{equation}
where $a_{\fp}\in \Z$ an integer with absolute value at most $2\sqrt{q}$; see \cite[\RNum{5}, Theorem~2.2]{Silverman1}. In particular, one sees that 
\begin{equation}\label{Eqn: num_rational_pts}
	\#\tilde{E}_{\fp}(\F_{\fp})=1-a_{\fp}+q.
\end{equation}

Now for each $\fp$, denote by $\F_{\fp}(\tilde{E}_{\fp})$ the function field corresponding to $\tilde{E}_{\fp}$. By the relationship between the geometric points of a smooth curve and divisors of the function field of the same curve (see \cite[Chapter~\RNum{2}]{Silverman1}), one finds that the set $\tilde{E}_{\fp}(\F_{\fp})$ is bijectively corresponding to the set of degree one effective divisors of $\F_{\fp}(\tilde{E}_{\fp})$. Recall that in our main theorem, let $S=\cP$, then \eqref{mainthmeq} can be written as 
\begin{equation}\label{Eqn: ap_deg>1_eq}
	-\lim_{n\to\infty}  \sum_{\substack{2\le \deg D\le n\\ D\in \mathfrak{D}(\F_{\fp}(\tilde{E}_{\fp}),\cP)}}\frac{\mu(D)}{|D|}-\sum_{\deg P=1}\frac{-1}{q}=\delta(\cP)=1.
\end{equation}

By $\sum_{\deg P=1}1=\#\tilde{E}_{\fp}(\F_{\fp})$ and inserting \eqref{Eqn: num_rational_pts} into \eqref{Eqn: ap_deg>1_eq}, we immediately have the following interesting application to arithmetic geometry. Recall that we say a prime ideal $\fp$ of $\cK$ is \emph{good} for $E$ if $\fp$ is not a prime factor of $\Delta_{E}$. 
\begin{theorem}\label{Thm: ap_and_Alladi}
	Let $E$ be an elliptic curve defined over a number field $\cK$. For every good prime $\fp$ of $E$, let $\F_{\fp}(\tilde{E}_{\fp})$ be the corresponding function field  of the residue curve $\tilde{E}_{\fp}$, with constant field $\F_q$, and let $a_{\fp}$ be such that $\#\tilde{E}_{\fp}(\F_{\fp})=1-a_{\fp}+q$, then we have 
	\begin{equation}\label{Eqn: express_ap}
		a_{\fp}=1-q\cdot \lim_{n\to\infty}  \sum_{\substack{2\le \deg D\le n\\ D\in \mathfrak{D}(\F_{\fp}(\tilde{E}_{\fp}),\cP)}}\frac{\mu(D)}{|D|}.
	\end{equation}
\end{theorem}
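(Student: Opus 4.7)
The plan is to derive the formula \eqref{Eqn: express_ap} as a direct consequence of Theorem~\ref{mainthm} applied to the function field $K = \F_{\fp}(\tilde{E}_{\fp})$ with $S = \cP$, isolating the degree-one contribution and identifying it with $\#\tilde{E}_{\fp}(\F_{\fp})$. Since $\tilde{E}_{\fp}$ is a smooth projective curve over the finite field $\F_{\fp}$, the field $\F_{\fp}(\tilde{E}_{\fp})$ is a global function field in the sense of Section~\ref{Sect: intro_and_main_thm}, with constant field $\F_q$ where $q = \#\F_{\fp}$, so Theorem~\ref{mainthm} applies and gives $\delta(\cP) = 1$.

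Next I would split the sum in \eqref{mainthmeq} according to whether $\deg D = 1$ or $\deg D \geq 2$. A degree-one effective divisor is necessarily a single prime divisor $P$ of degree one, and it is trivially distinguishable with $P_{\min}(D) = P \in \cP$. Since $\mu(P) = -1$ and $|P| = q$, the degree-one contribution to $-\sum \mu(D)/|D|$ equals
$$-\sum_{\deg P = 1}\frac{-1}{q} = \frac{\#\{P \in \cP : \deg P = 1\}}{q}.$$
By the standard correspondence between closed points of degree one on a smooth curve over $\F_{\fp}$ and its $\F_{\fp}$-rational points, this numerator is exactly $\#\tilde{E}_{\fp}(\F_{\fp}) = 1 - a_{\fp} + q$ from \eqref{Eqn: num_rational_pts}.

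Finally, Theorem~\ref{mainthm} with $S = \cP$ gives the limit identity
$$\frac{1 - a_{\fp} + q}{q} \;-\; \lim_{n\to\infty} \sum_{\smat{2 \leq \deg D \leq n \\ D \in \mathfrak{D}(\F_{\fp}(\tilde{E}_{\fp}), \cP)}}\frac{\mu(D)}{|D|} \;=\; 1,$$
which I would rearrange to isolate $a_{\fp}$, obtaining exactly the asserted identity \eqref{Eqn: express_ap}. The existence of the limit over degree $\geq 2$ divisors is guaranteed by Theorem~\ref{mainthm} together with the (trivially convergent) degree-one partial sum.

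There is no real obstacle here: the only point meriting care is the identification of degree-one prime divisors of $\F_{\fp}(\tilde{E}_{\fp})$ with $\F_{\fp}$-rational points of $\tilde{E}_{\fp}$, which the authors already flagged just before stating the theorem. Everything else is a routine rearrangement of \eqref{Eqn: ap_deg>1_eq}.
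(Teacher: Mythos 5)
Your proposal is correct and follows essentially the same route as the paper: apply Theorem~\ref{mainthm} with $S=\cP$ to the function field $\F_{\fp}(\tilde{E}_{\fp})$, isolate the degree-one divisors (each a single degree-one prime, distinguishable, contributing $-\mu(P)/|P| = 1/q$), identify their count with $\#\tilde{E}_{\fp}(\F_{\fp}) = 1 - a_{\fp} + q$, and rearrange. The paper records exactly this as equation~\eqref{Eqn: ap_deg>1_eq} and the sentence following it.
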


In particular, according to the modularity of elliptic curves over $\Q$  (see \cite{Wiles1995, Diamond-GTM-228}), we know that  there  exists a newform $f\in \mathcal{S}_2(\Gamma_0(N))$ with $N$ the conductor of $E$ such that for almost all prime integers $p$, the value $a_p$ in \eqref{Eqn: express_ap} is the $p$-th Fourier coefficient of $f$. Here $\mathcal{S}_2(\Gamma_0(N))$ is the space of cusp forms of weight $2$ and level $N$; see \cite{Diamond-GTM-228} for more details.

In the same manner, if we still use the above notations, but for every good prime $\fp$, we take $S_{\fp}=\tilde{E}_{\fp}(\F_{\fp})$, which is a finite set. Thus on one hand, we know that $\delta(S_{\fp})=0$. On the other hand, we have 
\begin{equation}\label{Eqn: ap_another_deg>1_eq}
	-\lim_{n\to\infty}  \sum_{\substack{2\le \deg D\le n\\ D\in \mathfrak{D}(\F_{\fp}(\tilde{E}_{\fp}),S_{\fp})}}\frac{\mu(D)}{|D|}-\sum_{\deg P=1}\frac{-1}{q}=\delta(S_{\fp})=0.
\end{equation}

Again, by $\sum_{\deg P=1}1=\#\tilde{E}_{\fp}(\F_{\fp})$ and inserting \eqref{Eqn: num_rational_pts} into \eqref{Eqn: ap_another_deg>1_eq},  we get an analogue of Theorem~\ref{Thm: ap_and_Alladi}, which is stated as follow. 

\begin{proposition}
	With the same notations as Theorem~\ref{Thm: ap_and_Alladi}, but taking $S_{\fp}=\tilde{E}_{\fp}(\F_{\fp})$ for every good prime $\fp$ of $E$, we have 
	\begin{equation}
		a_{\fp}=(q+1)-q\cdot \lim_{n\to\infty}  \sum_{\substack{2\le \deg D\le n\\ D\in \mathfrak{D}(\F_{\fp}(\tilde{E}_{\fp}),S_{\fp})}}\frac{\mu(D)}{|D|}.
	\end{equation}
\end{proposition}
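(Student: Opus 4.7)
The plan is to apply Theorem~\ref{mainthm} with $S=S_{\fp}=\tilde{E}_{\fp}(\F_{\fp})$, viewed as a subset of $\cP$ under the bijection between $\F_{\fp}$-rational points of $\tilde{E}_{\fp}$ and degree-one effective divisors of $\F_{\fp}(\tilde{E}_{\fp})$ recalled in Sect.~\ref{Sect: modular_explanation}. Since every prime divisor has degree at least one, a degree-one effective divisor is the same thing as a degree-one prime divisor, so after identification $S_{\fp}$ consists of all primes of $\F_{\fp}(\tilde{E}_{\fp})$ of degree one, and nothing else. This set is finite, so $\pi_{K,S_{\fp}}(n)=0$ for every $n\geq 2$, while $\pi_K(n)\sim q^n/n\to\infty$ by Lemma~\ref{Lem: prime_num_thm}; consequently $\delta(S_{\fp})=0$.

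Given this, Theorem~\ref{mainthm} forces the full limit in \eqref{mainthmeq} (with $K=\F_{\fp}(\tilde{E}_{\fp})$ and $S=S_{\fp}$) to equal $0$. I would next split the sum into its degree-one contribution and its degree-$\geq 2$ contribution. A distinguishable effective divisor $D$ of degree one is necessarily $D=P$ for a single prime $P$ of degree one, in which case $\mu(D)=-1$, $|D|=q$, and $P_{\min}(D)=P\in S_{\fp}$ automatically. Summing over all such $P$ counts exactly $\#\tilde{E}_{\fp}(\F_{\fp})$ terms, so the degree-one piece is $-\#\tilde{E}_{\fp}(\F_{\fp})/q$.

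Equating the vanishing of the total limit with the sum of the two pieces then gives
\begin{equation*}
-\lim_{n\to\infty}\sum_{\smat{2\leq \deg D\leq n\\ D\in \mathfrak{D}(\F_{\fp}(\tilde{E}_{\fp}),S_{\fp})}}\frac{\mu(D)}{|D|}=-\frac{\#\tilde{E}_{\fp}(\F_{\fp})}{q},
\end{equation*}
and substituting the Weil expression $\#\tilde{E}_{\fp}(\F_{\fp})=1-a_{\fp}+q$ from \eqref{Eqn: num_rational_pts} and solving for $a_{\fp}$ yields the stated identity. There is no genuine obstacle to overcome here beyond the bookkeeping at this final step: all of the analytic difficulty has already been absorbed into Theorem~\ref{mainthm}, and the point of choosing $S_{\fp}=\tilde{E}_{\fp}(\F_{\fp})$ is precisely to make the degree-one tail of the Alladi-type sum record the number of $\F_{\fp}$-rational points of $\tilde{E}_{\fp}$.
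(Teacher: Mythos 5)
Your proposal is correct and follows the same route as the paper: both apply Theorem~\ref{mainthm} with $S=S_{\fp}$, note $\delta(S_{\fp})=0$ since $S_{\fp}$ is the (finite) set of degree-one primes, split off the degree-one terms $\sum_{\deg P=1}(-1/q)=-\#\tilde{E}_{\fp}(\F_{\fp})/q$, and substitute $\#\tilde{E}_{\fp}(\F_{\fp})=1-a_{\fp}+q$ from \eqref{Eqn: num_rational_pts}. Your spelling-out of why degree-one effective divisors coincide with degree-one primes and why the $P_{\min}(D)\in S_{\fp}$ condition is automatic in degree one is exactly the bookkeeping the paper compresses into \eqref{Eqn: ap_another_deg>1_eq}.
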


Now we denote by $P_{E, \fp}(T)=1-a_{\fp}T+qT^2\in \Z[T]$ the $L$-function of $\tilde{E}_{\fp}$, i.e. the numerator of the zeta-function $Z(\tilde{E}_{\fp}; T)$. It is known that 
$$
P_{E, \fp}(T)=(1-\alpha_{\fp}T)(1-\beta_{\fp}T)
$$
such that 
$$
\#\tilde{E}_{\fp}(\F_{\fp^m})=1-(\alpha_{\fp}^m+\beta_{\fp}^m)+q^m
$$
for every integer $m$. Notice that if we denote by $\tilde{E}_{\fp^m}$ the base change $\tilde{E}_{\fp}$. Then by \eqref{Eqn: num_rational_pts} we have an integer $a_{\fp^m}$ such that  
$
\#\tilde{E}_{\fp}(\F_{\fp^m})=1-a_{\fp^m}+q^m
$. 
Hence $a_{\fp^m}=\alpha_{\fp}^m+\beta_{\fp}^m$ for all positive integers $m$. On the other hand, applying \eqref{Eqn: express_ap} to $\tilde{E}_{\fp^m}$, with $\F_{\fp^m}(\tilde{E}_{\fp})$ replaced with $\F_{\fp^m}(\tilde{E}_{\fp^m})$, one has for every $m\in \N$
\begin{equation}\label{Eqn: express_ap_m}
	a_{\fp^m}=1-q^m\cdot \lim_{n\to\infty}  \sum_{\substack{2\le \deg D\le n\\ D\in \mathfrak{D}(\F_{\fp^m}(\tilde{E}_{\fp^m}),\cP)}}\frac{\mu(D)}{|D|}.
\end{equation}
So we can deduce the relationships between the corresponding Alladi's sum.

\begin{proposition}
	With the above notations, for every $m\in \N$, the corresponding terms 
	$$
	1-q^m\cdot \lim_{n\to\infty}  \sum_{\substack{2\le \deg D\le n\\ D\in \mathfrak{D}(\F_{\fp^m}(\tilde{E}_{\fp^m}),\cP)}}\frac{\mu(D)}{|D|}
	$$
	satisfy the Newton's identities of homogeneous symmetric polynomials. In particular, 
	\begin{equation}\label{Eqn: 1_2_sympower}
		1-q^2\cdot \lim_{n\to\infty}  \sum_{\substack{2\le \deg D\le n\\ D\in \mathfrak{D}(\F_{\fp^2}(\tilde{E}_{\fp^2}),\cP)}}\frac{\mu(D)}{|D|}=\left(1-q\cdot \lim_{n\to\infty}  \sum_{\substack{2\le \deg D\le n\\ D\in \mathfrak{D}(\F_{\fp}(\tilde{E}_{\fp}),\cP)}}\frac{\mu(D)}{|D|}\right)^2-2q.
	\end{equation}
	
\end{proposition}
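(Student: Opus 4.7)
The plan is to identify the quantity $1-q^m\lim_n\sum \mu(D)/|D|$ with the power sum $\alpha_{\fp}^m+\beta_{\fp}^m$ via \eqref{Eqn: express_ap_m}, and then invoke Newton's identities for the two-variable power sums. First, by the Weil conjecture stated in \eqref{Eqn: zeta_fun_rational} together with the factorization $P_{E,\fp}(T)=(1-\alpha_{\fp}T)(1-\beta_{\fp}T)$, the reciprocal roots $\alpha_{\fp},\beta_{\fp}$ are the elementary roots of the quadratic $1-a_{\fp}T+qT^2$, so they satisfy
\[
\alpha_{\fp}+\beta_{\fp}=a_{\fp},\qquad \alpha_{\fp}\beta_{\fp}=q.
\]
The base-change formula $\#\tilde{E}_{\fp}(\F_{\fp^m})=1-(\alpha_{\fp}^m+\beta_{\fp}^m)+q^m$, combined with the definition $\#\tilde{E}_{\fp^m}(\F_{\fp^m})=1-a_{\fp^m}+q^m$ applied to the base-changed curve $\tilde{E}_{\fp^m}$, identifies $a_{\fp^m}=\alpha_{\fp}^m+\beta_{\fp}^m$, i.e., the $m$-th power sum of the two reciprocal roots.

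Next, I would apply Newton's identities to the two variables $\alpha_{\fp},\beta_{\fp}$. Writing $p_m:=\alpha_{\fp}^m+\beta_{\fp}^m$ and $e_1=\alpha_{\fp}+\beta_{\fp}=a_{\fp}$, $e_2=\alpha_{\fp}\beta_{\fp}=q$, Newton's identities read
\[
p_1=e_1,\qquad p_2=p_1e_1-2e_2,\qquad p_m=p_{m-1}e_1-p_{m-2}e_2\ \ (m\geq 3),
\]
so each $a_{\fp^m}$ is a polynomial expression in $a_{\fp}$ and $q$. Substituting \eqref{Eqn: express_ap_m} for each $a_{\fp^m}$ on both sides of every Newton identity translates these algebraic relations into identities for the Alladi-type sums, which is exactly the content of the first assertion of the proposition.

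For the explicit special case \eqref{Eqn: 1_2_sympower}, I take $m=2$ in Newton's identity: $p_2=e_1^2-2e_2=a_{\fp}^2-2q$. Hence $a_{\fp^2}=a_{\fp}^2-2q$. Substituting \eqref{Eqn: express_ap_m} with $m=2$ on the left and with $m=1$ on the right yields
\[
1-q^2\lim_{n\to\infty}\sum_{\smat{2\le\deg D\le n\\ D\in\mathfrak{D}(\F_{\fp^2}(\tilde E_{\fp^2}),\cP)}}\frac{\mu(D)}{|D|}=\Bigl(1-q\lim_{n\to\infty}\sum_{\smat{2\le\deg D\le n\\ D\in\mathfrak{D}(\F_{\fp}(\tilde E_{\fp}),\cP)}}\frac{\mu(D)}{|D|}\Bigr)^{\!2}-2q,
\]
which is \eqref{Eqn: 1_2_sympower}.

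There is no genuine obstacle here: once the Alladi-type limit is identified with $a_{\fp^m}$ via Theorem~\ref{Thm: ap_and_Alladi} applied to the base-changed curve $\tilde{E}_{\fp^m}$, the proposition is a purely formal consequence of Newton's identities for two variables. The only subtlety worth double-checking is the compatibility of the base change: one must verify that $\tilde{E}_{\fp^m}$ remains smooth (which follows from $\fp$ being good for $E$) and that the Frobenius eigenvalues of $\tilde{E}_{\fp^m}$ over $\F_{\fp^m}$ are indeed $\alpha_{\fp}^m$ and $\beta_{\fp}^m$, a standard fact about zeta functions of curves under finite-field extension.
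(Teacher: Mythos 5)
Your proof is correct and takes essentially the same route as the paper: both identify $1-q^m\lim_n\sum\mu(D)/|D|$ with the power sum $a_{\fp^m}=\alpha_{\fp}^m+\beta_{\fp}^m$ via \eqref{Eqn: express_ap_m} and then invoke Newton's identities for two variables, with the $m=2$ case reducing to $(\alpha_{\fp}+\beta_{\fp})^2=\alpha_{\fp}^2+\beta_{\fp}^2+2\alpha_{\fp}\beta_{\fp}$ and $\alpha_{\fp}\beta_{\fp}=q$. Your version merely spells out the intermediate steps (the recursion $p_m=p_{m-1}e_1-p_{m-2}e_2$ and the base-change compatibility) that the paper's terser proof leaves implicit.
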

\begin{proof}
	Let $t_m(x,y):=x^m+y^m$ be the $k$th symmetric power-sum of two variables. Then the set $t_m(\alpha_{\fp}, \beta_{\fp})=a_{\fp^m}$ for every non-negative integer $m$. Thus our conclusion follows directly from the Newton's identities of homogeneous symmetric polynomials. Moreover, when $m=2$, one has $(\alpha_{\fp}+\beta_{\fp})^2=\alpha_{\fp}^2+\beta_{\fp}^2+2\alpha_{\fp}\beta_{\fp}$. Observe that $\alpha_{\fp}\beta_{\fp}=q$, we get \eqref{Eqn: 1_2_sympower} via the relationship in \eqref{Eqn: express_ap_m}.
\end{proof}

\subsection{A distribution on the restricted sums of \texorpdfstring{$\mu(D)/|D|$}{}}
In this subsection, we take $\cK=\Q$. Recall that there is a famous equidistribution property of $\{a_p\}$, which is described by the Sato-Tate conjecture. More precisely, over the rational number field $\Q$, this conjecture (which is known) says the followings:

\emph{Assume $E$ does not have CM. Then the values ${a_p/\sqrt{p}}$ (as the prime $p$ varies) are equidistributed in $[-2,2]$ with resepct to the probablity measure $\sqrt{1-x^2/4}\,dx/\pi$, i.e.
	\begin{equation}\label{Eqn: Sato-Tate}
		\lim_{X\to \infty}\frac{\#\set{p\leq X: \alpha\leq \frac{a_p}{\sqrt{p}}\leq \beta}}{\#\set{p\leq X}}=\int_{\alpha}^{\beta}\frac{1}{\pi}\sqrt{1-\frac{x^2}{4}}\,dx.
\end{equation}}
For more details about the known cases and the generalizations of this  conjecture, see \cite{Taylor2008, ClozelHarrisTaylor2008, HarrisShepherd-BarronTaylor2010, Barnet-LambGeraghtyHarrisTaylor2011}. 
With connection stated in Theorem~\ref{Thm: ap_and_Alladi}, we have the following result, which states the equidistribution property of the Alladi sum. 
\begin{corollary}
	If $E$ does not have CM, then the set
	$$
	\set{\frac{1}{\sqrt{p}}- \sqrt{p}\lim_{n\to\infty}  \sum_{\substack{2\le \deg D\le n\\ D\in \mathfrak{D}(\F_{p}(\tilde{E}_{p}),\cP)}}\frac{\mu(D)}{|D|}}_{p}
	$$
	is equidistributed in the closed interval $[-2,2]$ with respect to the measure $\frac{1}{\pi}\sqrt{1-\frac{x^2}{4}}\,dx$.
\end{corollary}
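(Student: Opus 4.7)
The plan is to deduce the corollary immediately from Theorem~\ref{Thm: ap_and_Alladi} combined with the Sato--Tate theorem for non-CM elliptic curves over $\Q$. First, I would specialize Theorem~\ref{Thm: ap_and_Alladi} to the number field $\cK = \Q$: for every good rational prime $p$, the residue field is $\F_p$, so $q = p$, and the identity there reads
\[
a_p \;=\; 1 \;-\; p\cdot \lim_{n\to\infty}\sum_{\smat{2\le \deg D\le n\\ D\in \mathfrak{D}(\F_{p}(\tilde{E}_{p}),\cP)}}\frac{\mu(D)}{|D|}.
\]
Dividing both sides by $\sqrt{p}$, I would observe that the generic element of the set displayed in the corollary is precisely $a_p/\sqrt{p}$.

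Next, I would invoke the Sato--Tate conjecture for non-CM elliptic curves over $\Q$, now a theorem by the combined work of Taylor, Clozel--Harris--Taylor, Harris--Shepherd-Barron--Taylor, and Barnet-Lamb--Geraghty--Harris--Taylor; this is the content recorded in \eqref{Eqn: Sato-Tate} above. It asserts precisely that $\{a_p/\sqrt{p}\}_p$ is equidistributed in $[-2,2]$ with respect to the measure $\frac{1}{\pi}\sqrt{1-x^2/4}\,dx$, which in view of the previous paragraph is exactly the desired conclusion.

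Finally, I would remark that Theorem~\ref{Thm: ap_and_Alladi} only yields the identity for good primes (those not dividing the discriminant ideal $\Delta_E$), but this is harmless since there are only finitely many bad primes and omitting or altering finitely many terms cannot affect an equidistribution statement with respect to a continuous measure. The proof is in this sense entirely formal: there is no real obstacle, as the deep content is fully absorbed into the Sato--Tate theorem and the identity in Theorem~\ref{Thm: ap_and_Alladi}. The only step requiring care is bookkeeping the substitution $q = p$ and the correct factor of $\sqrt{p}$ when dividing through so that the expression inside the set matches $a_p/\sqrt{p}$ on the nose.
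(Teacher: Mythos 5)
Your proposal is correct and coincides with what the paper does (the paper simply states the corollary as an immediate consequence of Theorem~\ref{Thm: ap_and_Alladi} and the Sato--Tate theorem~\eqref{Eqn: Sato-Tate} without spelling out the algebra). Your explicit bookkeeping of $q=p$, the division by $\sqrt{p}$, and the remark about finitely many bad primes are exactly the (trivial) details the paper elides.
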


\subsection{The case of general smooth curves}
More generally, let  $C$ be a smooth curve  defined over a number field $\cK$. For every prime $\fp$ at which $C$ has good reduction $\tilde{C}_{\fp}$. All the setups above are also well-defined in this situation. In particular, we have 
$$
Z(\tilde{C}_{\fp}; T)=\frac{P_{2g}(T)}{(1-T)(1-qT)},
$$
where $P_{2g}(T)=1-a_{\fp, C}T+\cdots+q^gT^{2g}\in \Z[T]$ is a polynomial of degree $2g$, where $g$ is the genus of the curve $C$ (also the genus of the function field $\F_{\fp}(\tilde{C}_{\fp})$). And we also have $\#\tilde{C}_{\fp}(\F_{\fp})=1-a_{\fp, C}+q$. Hence arguing similarly as above, we get the following result. 

\begin{theorem}
	Let $C$ be a smooth projective curve defined over a number field $\cK$. For every good prime $\fp$ of $C$, let $\F_{\fp}(\tilde{C}_{\fp})$ be the corresponding function field  of the residue curve $\tilde{C}_{\fp}$, with constant field $\F_q$, and let $a_{\fp}$ be such that $\#\tilde{C}_{\fp}(\F_{\fp})=1-a_{\fp}+q$, then we have 
	\begin{equation}\label{Eqn: express_ap_C}
		a_{\fp}=1-q\cdot \lim_{n\to\infty}  \sum_{\substack{2\le \deg D\le n\\ D\in \mathfrak{D}(\F_{\fp}(\tilde{C}_{\fp}),\cP)}}\frac{\mu(D)}{|D|}.
	\end{equation}
\end{theorem}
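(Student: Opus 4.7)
\smallskip
\noindent\textbf{Proof proposal.}
The plan is to mimic the argument used to establish Theorem~\ref{Thm: ap_and_Alladi}, with elliptic curves replaced by a general smooth projective curve, the only new ingredient being the Weil form of the zeta function for curves of arbitrary genus, which has already been recalled above. Concretely, I would apply Theorem~\ref{mainthm} to the function field $K=\F_{\fp}(\tilde{C}_{\fp})$ (whose constant field is $\F_q$) with the full prime set $S=\cP$, so that $\delta(S)=1$. This gives
\begin{equation*}
-\lim_{n\to\infty}\sum_{\substack{1\le \deg D\le n\\ D\in\mathfrak{D}(\F_{\fp}(\tilde{C}_{\fp}),\cP)}}\frac{\mu(D)}{|D|}=1.
\end{equation*}

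Next I would split the sum according to $\deg D=1$ and $\deg D\geq 2$. A divisor of degree $1$ lying in $\mathfrak{D}(K,\cP)$ with $\mu(D)\neq 0$ must be a single prime of degree $1$, so its $\mu$-value is $-1$ and its norm is $q$. Hence the degree-$1$ contribution equals $-\frac{1}{q}\sum_{\deg P=1}1$. Because $\tilde{C}_{\fp}$ is a smooth projective curve over $\F_{\fp}$, degree-one closed points are in bijection with $\F_{\fp}$-rational points, so $\sum_{\deg P=1}1=\#\tilde{C}_{\fp}(\F_{\fp})=1-a_{\fp}+q$, where the last equality follows from the Weil form of $Z(\tilde{C}_{\fp};T)$ recalled just before the theorem (compare the coefficient of $T$ in $\log Z$).

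Substituting this identification into the split sum yields
\begin{equation*}
-\lim_{n\to\infty}\sum_{\substack{2\le \deg D\le n\\ D\in\mathfrak{D}(\F_{\fp}(\tilde{C}_{\fp}),\cP)}}\frac{\mu(D)}{|D|}+\frac{1-a_{\fp}+q}{q}=1,
\end{equation*}
from which one immediately solves for $a_{\fp}$ and obtains \eqref{Eqn: express_ap_C}.

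There is essentially no serious obstacle beyond those already resolved in the elliptic curve case: the main theorem is cited as a black box, and the substitution step is elementary once one has the Weil-type expression for $\#\tilde{C}_{\fp}(\F_{\fp})$. The only point requiring (mild) care is the genus-free identification of degree-one closed points of $\tilde{C}_{\fp}$ with degree-one prime divisors of $\F_{\fp}(\tilde{C}_{\fp})$, which holds because $\tilde{C}_{\fp}$ is smooth and projective and so its nonsingular curve and the set of places of its function field coincide; this is exactly the content invoked in the elliptic curve setting and extends verbatim.
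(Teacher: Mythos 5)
Your proposal is correct and follows essentially the same route as the paper: the paper states the general-curve theorem with only the remark ``arguing similarly as above,'' referring back to the elliptic curve case (Theorem~\ref{Thm: ap_and_Alladi}), and your argument is exactly that argument spelled out — apply Theorem~\ref{mainthm} with $S=\cP$ to the function field $\F_{\fp}(\tilde{C}_{\fp})$, isolate the degree-one contribution $\sum_{\deg P=1}\frac{-1}{q}=-\frac{\#\tilde{C}_{\fp}(\F_{\fp})}{q}$, substitute $\#\tilde{C}_{\fp}(\F_{\fp})=1-a_{\fp}+q$, and solve for $a_{\fp}$.
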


\section*{Acknowledgements}
The authors would like to thank Ning Ma for helpful comments. The author Biao Wang is grateful to the support of the Doctoral Dissertation Fellowship for Fall 2020 from the Department of Mathematics at the State University of New York at Buffalo.


\end{document}